\setlist[itemize]{noitemsep, nolistsep}
\definecolor{babyblue}{rgb}{0.63, 0.79, 0.95}
\newcolumntype{H}{>{\setbox0=\hbox\bgroup}c<{\egroup}@{}}
\newtheorem{Th}{\underline{\bf Theorem}}
\newtheorem{Pro}{Proposition}
\newtheorem{Lem}{\underline{\bf Lemma}}
\def\S{{\bf S}}
\def\Z{{\bf Z}}
\def\a{{\bf a}}
\def\bfb{{\bf b}}
\def\z{{\bf z}}
\def\0{{\bf 0}}
\def\boxit#1{\vbox{\hrule\hbox{\vrule\kern6pt
          \vbox{\kern6pt#1\kern6pt}\kern6pt\vrule}\hrule}}
\def\wt{\widetilde}
\def\sumi{\sum_{i=1}^n}
\def\wh{\widehat}
\def\log{\hbox{log}}
\def\bse{\begin{eqnarray*}}
\def\ese{\end{eqnarray*}}
\def\be{\begin{eqnarray}}
\def\ee{\end{eqnarray}}
\def\bq{\begin{equation}}
\def\eq{\end{equation}}
\def\bse{\begin{eqnarray*}}
\def\ese{\end{eqnarray*}}
\def\pr{\hbox{pr}}
\def\wh{\widehat}
\def\trans{^{\rm T}}
\def\b1e{{\mathbf e}}
\def\bS{{\mathbf S}}
\def\eff{_{\rm eff}}
\def\n{\nonumber}
\def\bb{{\boldsymbol\beta}}
\def\bA{{\mathbf A}}
\def\b1e{{\mathbf e}}
\def\bS{{\mathbf S}}
\newcommand{\bxi}{\mbox{\boldmath $\xi$}}
\def\bA{{\mathbf A}}
\def\b1e{{\mathbf e}}
\def\bS{{\mathbf S}}
\def\bo{{\mathbf o}}
\def\pr{\hbox{pr}}
\def\wh{\widehat}
\def\trans{^{\rm T}}
\def\b1e{{\mathbf e}}
\def\bq{{\mathbf q}}
\def\bS{{\mathbf S}}
\newcommand{\bphi}{\mbox{\boldmath $\phi$}}
\def\bq{{\boldsymbol{q}}}
\newcommand{\indep}{\rotatebox[origin=c]{90}{$\models$}}
\begin{document}

\begin{center}
{\LARGE{\bf Super doubly robust and efficient estimator for informative covariate censoring}} 
\end{center}

\vskip 2mm
\begin{center}
Zhewei Zhang$^{1}$, Yanyuan Ma$^{1}$, Karen Marder$^2$, and Tanya P. Garcia$^3$
\\
$^1$Department of Statistics, Penn State University, State College, PA 16802\\
$^2$Department of Neurology, Columbia University Medical Center, New York, NY 10032\\
$^3$Department of Biostatistics, Gillings School of Global Public Health, University of North Carolina at Chapel Hill, Chapel Hill, NC, 27516\\ 
\end{center}

\begin{abstract}
\noindent

Early intervention in neurodegenerative diseases requires
  identifying periods before diagnosis when decline is rapid enough to
  detect whether a therapy is slowing progression. Since rapid decline
  typically occurs close to diagnosis, identifying these periods
  requires knowing each patient's time of diagnosis. Yet many patients
  exit studies before diagnosis, making time of diagnosis
  right-censored by time of study exit---creating a right-censored
  covariate problem when estimating decline. Existing estimators
  either assume noninformative covariate censoring, where time of
  study exit is independent of time of diagnosis, or allow informative
  covariate censoring, but require correctly specifying how these
  times are related. We developed SPIRE (Semi-Parametric Informative
  Right-censored covariate Estimator), a super doubly robust estimator
  that remains consistent without  correctly specifying densities
  governing time of diagnosis or time of study exit. Typical double
  robustness requires at least one density to be correct; SPIRE
  requires neither. When both densities are correctly specified, SPIRE
  achieves semiparametric efficiency. We also developed a test for
  detecting informative covariate censoring. Simulations with 85\%
  right-censoring demonstrated SPIRE's robustness, efficiency and reliable detection of informative covariate censoring. Applied to Huntington disease data, SPIRE handled informative covariate censoring appropriately and remained consistent regardless of density specification, providing a reliable tool for early intervention.
\end{abstract}

\noindent
{\bf Keywords}: Huntington disease, informative covariate censoring,
right-censored covariate, semiparametric efficient, super doubly
robust


\clearpage

\section{Introduction}

Slowing or halting neurodegenerative diseases before irreversible damage remains the holy grail of therapeutic development \citep{Tabrizietal2022-hd, Benataretal2022,Jucker2023}. Achieving this goal requires identifying the optimal intervention window---when disease-related decline is rapid enough to determine whether a therapy is slowing or halting decline. This decline is measured by how quickly cognitive test scores worsen, motor abilities deteriorate, and/or brain volume shrinks over time.
However, measuring this decline becomes problematic when data from different patients are compared using arbitrary timepoints, like age or study enrollment date, because patients at the same  age or enrollment date may be at completely different points in their disease---with some declining rapidly and others stable.  When we average across such misaligned data, we obscure the true patterns of decline. Instead, by  anchoring data to diagnosis---when all patients meet the same clinical criteria---researchers can evaluate how clinical measures change as patients approach this point,  revealing when decline is most rapid \citep{DempseyMcCullagh2018,kong2018conditional,chu2020stochastic, scahill2020biological}. 

This anchoring strategy, however, faces a major obstacle:
neurodegenerative diseases progress slowly over decades, so many
patients exit studies before reaching diagnosis. Their time of
diagnosis becomes right-censored---known only to occur after study
exit. The very covariate needed for temporal alignment is
right-censored, creating a statistical problem in which understanding how
clinical measures change as patients approach diagnosis requires using
a right-censored covariate.

Most statistical estimators that handle right-censored covariates
assume \textit{noninformative covariate censoring}: the time of
diagnosis $X$ is independent of the time of study exit $C$ given fully
observed covariates $\Z$ (such as sex and genetic markers); written as
$X\ \indep\ C\mid \Z$ \citep{kong2018conditional,
  zhang2018mechanistic, atem2019improved, lotspeich2024making,
  lee2024robust}.  This assumption is plausible when right-censoring
is administrative, like when a study ends due to loss of funding or
fixed calendar cutoff dates. Yet, this assumption is implausible when right-censoring is due to practical burdens that increase as their disease progresses. For example, patients nearing diagnosis may have mobility limitations or fatigue, making them more like to miss follow-up visits or withdraw from the study altogether.  These reasons provide important, additional information that should not be ignored. The reasons link the time of study exit $C$ to time of diagnosis $X$, even after
adjusting for $\Z$, creating \textit{informative covariate censoring}
($X\centernot\indep C \mid \Z$). Ignoring this dependence by assuming noninformative covariate censoring causes estimators to underestimate how rapidly decline occurs before diagnosis, preventing researchers from correctly identifying intervention windows.

Estimators to address informative covariate censoring have recently
emerged, but they are largely adaptations of  existing estimators
designed for noninformative covariate censoring
\citep{vazquez2024establishing}.  These estimators require modeling
how $X$ and $C$ depend on each other given $\Z$---captured by the
joint density $f_{C,X|\Z}$, or equivalently by $f_{C|\Z}$ and
$f_{X|C,\Z}$.  Yet modeling  $f_{C|\Z}$ and
$f_{X|C,\Z}$ is challenging because researchers observe either the time of diagnosis (when patients are diagnosed
before study exit) or the time of study exit (when they exit the study before a diagnosis is made), but never both together in the same patient. Without observing both times, researchers have no
direct evidence about their joint dependency, making it impossible to
validate any modeling assumptions. Even nonparametric approaches
cannot solve this limitation because estimating the joint density
requires joint observations of both the time of diagnosis and
time of study exit, which informative covariate censoring prevents
\citep{Little2019}. 

A more reliable estimator would handle informative covariate censoring without requiring researchers to model dependencies that are impossible for them to observe. To fill this gap,  we developed the \underline{S}emi-\underline{P}arametric \underline{I}nformative \underline{R}ight-censored covariate \underline{E}stimator (SPIRE). SPIRE  removes the need to model the censoring density $f_{C|\Z}$ and remains consistent even when the only required density, $f_{X|C,\Z}$, is misspecified.  SPIRE thus achieves super double robustness---consistency without needing either $f_{C|\Z}$ or $f_{X|C,\Z}$ to be correctly specified---in contrast to typical double robustness, which needs at least one of these two densities to be correct. When $f_{X|C,\Z}$ is correctly specified, SPIRE is also semiparametric efficient, achieving the lowest possible variance among all consistent estimators under the same modeling assumptions. This combination of super double robustness and semiparametric efficiency allows SPIRE to deliver reliable estimates that capture when decline occurs most rapidly, helping researchers identify optimal intervention windows despite modeling uncertainties.

Moreover, SPIRE works in under both types of covariate censoring (noninformative and informative); researchers do not need to know in advance which type applies to their data. However, distinguishing between the two types
has practical value: when covariate censoring is truly noninformative
($X\ \indep\  C\mid \Z$), simpler estimators achieve greater
efficiency by  avoiding the uncertainty of estimating unnecessary
dependency parameters. Thus, we also developed a test to detect whether covariate censoring is noninformative or informative, allowing researchers to select of the most suitable estimator for their data.

 \section{A class of consistent estimators}
\label{sec:class-consistent-estimators}

\subsection{Model assumptions and identifiability}
\label{sec:notation}

We construct SPIRE under two main assumptions: informative covariate
censoring, $C\centernot\indep X \mid \Z$, which allows time of study exit
$C$ to depend on time of diagnosis $X$ given fully observed covariates
$\Z$, and conditional independence $C \indep Y \mid X, \Z$, which
states that the time of study exit is unrelated to clinical measures $Y$ once
$X$ and $\Z$ are known. Here, $Y$ represents clinical measures such as
cognitive scores, motor abilities, or brain volumes, whose slopes
reveal when decline occurs most rapidly.

At first glance, these two assumptions may appear contradictory: if clinical measures depend on time of diagnosis, and time of study exit depends on time of diagnosis, why would time of study exit not also depend on clinical measures? Yet this combination is plausible in neurodegenerative diseases where the factors driving study exit differ from the clinical measures researchers track.
For instance, in Huntington disease, a genetically inherited disorder, 
a diagnosis is made primarily based on motor signs, while the clinical measures $Y$ used in trials are often composite scores from motor, cognitive, and functional assessments \citep{schobel2017motor}. Time of study exit $C$ depends on time of diagnosis $X$ because patients closer to diagnosis face greater practical burdens (mobility limitations, fatigue, difficulty attending study visits) that make study exit more likely. However, among patients with the same time of diagnosis $X$ and identical covariates $\Z$, variation in their composite scores $Y$  does not predict study exit beyond what $X$ already explains.  Once $X$ and $\Z$ are known, $Y$ provides no additional information about time of study exit, supporting the conditional independence assumption.

Under these assumptions, our goal is to estimate the parameter vector
$\bb$ in the parametric model $f_{Y|X,\Z}(y,x,\z;\bb)$, where $\bb$
characterizes  how rapidly clinical measures decline as patients approach their time of diagnosis
Under right-censoring, we do not observe
time of diagnosis $X$ directly. Instead, we observe $W = \min(X, C)$,
the minimum of time of diagnosis and time of study exit, and the censoring
indicator $\Delta = I(X \leq C)$, which equals 1 when diagnosis occurs
before study exit and 0 otherwise, while covariates $\Z$ remain fully
observed. Thus we must estimate $\bb$ using only the observed data
$(Y, W, \Delta, \Z)$. 

Based on the observed data $(Y, W, \Delta, \Z)$ and our modeling
framework, the likelihood for a single observation takes the form: 
\be\label{eq:model}
&&\left\{\int_{w}^{\infty}f_{Y|X,\Z}(y,w,\z;\bb)f_{C,X|\Z}(c,w,\z)dc\right\}^{\delta}\n\\
&&\left\{\int_{w}^{\infty}f_{Y|X,\Z}(y,x,\z;\bb)f_{C,X|\Z}(w,x,\z)dx\right\}^{1-\delta}f_{\Z}(\z).
\ee
Correctly specifying the joint density $f_{C,X|\Z}$ in
\eqref{eq:model} is problematic because researchers never observe time
of diagnosis and time of study exit together in the same patient. Rather than risk
misspecifying this density---which would bias estimates of how rapidly clinical measures decline as patients approach diagnosis---we leave
$f_{C,X|\Z}$ unspecified. We also leave  $f_{\Z}$ unspecified to
maintain full flexibility. Despite not specifying  functional forms
for these densities, Lemma \ref{lem:1} establishes that both
densities, along with $\bb$,  remain identifiable from the observed
data (proof in  Section \ref{proofoflem1}, Supplementary Material).  
\begin{Lem}\label{lem:1}
  Suppose $f_{Y|X,\Z}(y,x,\z,\bb)$ satisfies the completeness
  condition: if a function $g(x,\z)$ satisfies
$\int f_{Y|X,\Z}(y,x,\z,\bb)g(x,\z)dx=0$ for all $y,\z$, then
$g(x,\z)=0$. Under this condition, all components in
the likelihood \eqref{eq:model}, i.e., $f_{C,X|\Z}$, $f_{\Z}$, $\bb$, are
identifiable.
\end{Lem}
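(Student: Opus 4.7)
The plan is to identify each component of the likelihood sequentially by isolating it from the observable distribution of $(Y, W, \Delta, \Z)$, exploiting the factorization that $\Delta = 1$ affords and then invoking the completeness condition to handle the $\Delta = 0$ stratum.

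First I would recover $f_\Z$ trivially from the marginal of $\Z$, which is fully observed. Conditioning on $\Delta = 1$, the likelihood contribution factorizes as $L_1(y,w,\z) = f_{Y|X,\Z}(y,w,\z;\bb)\cdot \phi(w,\z)\cdot f_\Z(\z)$, where $\phi(w,\z) = \int_w^\infty f_{C,X|\Z}(c,w,\z)\,dc$. Dividing $L_1$ by the observable marginal density of $(W,\Z)$ restricted to $\{\Delta=1\}$ isolates $f_{Y|X,\Z}(y,w,\z;\bb)$ as the conditional density of $Y$ given $(W,\Delta=1,\Z)$; standard parametric identifiability within the $f_{Y|X,\Z}(\cdot;\bb)$ family then pins down $\bb$, and as a byproduct $\phi(w,\z)$ is identified.

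Next I would attack the $\Delta=0$ sub-likelihood $L_0(y,w,\z) = f_\Z(\z)\int_w^\infty f_{Y|X,\Z}(y,x,\z;\bb)\,f_{C,X|\Z}(w,x,\z)\,dx$. Because $\bb$ and $f_\Z$ are already identified, the inner integral is known as a function of $(y,w,\z)$. If two candidates $f_{C,X|\Z}^{(1)}$ and $f_{C,X|\Z}^{(2)}$ produced the same observable distribution, their pointwise difference $D(c,x,\z)$ would satisfy
\[
\int_w^\infty f_{Y|X,\Z}(y,x,\z;\bb)\, D(w,x,\z)\, dx \;=\; 0 \quad \text{for every } y,w,\z.
\]
For each fixed $w,\z$ I would define $g(x,\z) = D(w,x,\z)\, I(x>w)$ and read the displayed equation as $\int f_{Y|X,\Z}(y,x,\z;\bb)\, g(x,\z)\, dx = 0$ for all $y,\z$; the completeness condition then forces $g\equiv 0$, so $f_{C,X|\Z}$ is identified pointwise on $\{x>c\}$. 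The marginal $\phi$ secured in the previous step captures the portion on $\{x\le c\}$ that enters the likelihood, which closes the argument for $f_{C,X|\Z}$ as it appears in \eqref{eq:model}.

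The hard part will be the careful deployment of completeness: one must fix $w$ (not merely $\z$) before reading the constraint as a completeness identity, and verify that extending $g$ by zero onto $\{x\le w\}$ preserves the hypothesis so that completeness as stated applies to the extended test function. A secondary subtlety is ensuring that the support of $(W,\Delta=1,\Z)$ is rich enough that the identified conditional density $f_{Y|X,\Z}(\cdot;\bb)$ varies over enough $(w,\z)$ to single out $\bb$ within the parametric family; this is a standard regularity assumption I would invoke implicitly alongside completeness.
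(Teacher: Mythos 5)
Your proposal is correct, and it reaches the conclusion by a partly different route than the paper. The treatment of the censored stratum is essentially identical: both you and the paper equate the $\Delta=0$ contributions to \eqref{eq:model}, strip off the already-identified $\bb$ and $f_\Z$, and apply completeness with the censoring time held fixed as a parameter to force the candidate densities to agree on $\{c<x\}$. Where you genuinely differ is in identifying $\bb$: you observe that on $\{\Delta=1\}$ the factor $\int_w^\infty f_{C,X|\Z}(c,w,\z)\,dc$ does not involve $y$, so the conditional law of $Y$ given $(W=w,\Delta=1,\Z=\z)$ is exactly $f_{Y|X,\Z}(\cdot,w,\z;\bb)$ --- a one-line argument that also makes transparent why the complete-case estimator is consistent. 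The paper instead refactorizes the likelihood as $f_{Y|\Z}f_{X|Y,\Z}f_{C|X,\Z}$, derives two ``diagonal'' identities, and takes their ratio to recover $f_{Y|X,\Z}(\cdot;\bb)$ via Bayes; it then uses completeness a second time, on $f_{Y|\Z}(y,\z)=\int f_{Y|X,\Z}(y,x,\z,\bb)f_{X|\Z}(x,\z)\,dx$, to separate $f_{X|\Z}$ from $S_{C|X,\Z}(x,x,\z)$, whereas you recover the same information from the identified normalizer $\phi(w,\z)$ together with $\int_{c<x} f_{C,X|\Z}(c,x,\z)\,dc$, which your single completeness step already delivers. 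The two implicit regularity requirements you flag --- that completeness be invoked at each fixed $w$ with $g$ extended by zero, and that the uncensored support be rich enough to pin down $\bb$ within the parametric family --- are real but are equally present in the paper's argument: its ratio step divides by $f_{X|\Z}(x,\z)S_{C|X,\Z}(x,x,\z)$, so it too only operates where uncensored observations occur with positive density, and its final ``Hence, $\bb=\wt\bb$'' invokes the same parametric identifiability. Net effect: your proof is shorter and cleaner about which stratum of the data identifies which component; the paper's version buys an explicit decomposition into $f_{X|\Z}$, $S_{C|X,\Z}(x,x,\z)$, and $f_{C|X,\Z}$ on $\{c<x\}$, which is the form it reuses when rewriting the likelihood at the end of the argument.
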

In practice, the completeness condition must be verified case by case. For example, when $Y\mid X,\Z$ follows a normal distribution with mean $\beta_0+\beta_1X+ a(\Z)$ for any function $a(\cdot)$ and standard deviation $\sigma$, the completeness condition can be verified using techniques from Laplace transform theory \citep{chareka2007finite}. These techniques show that the only function $g(x,\z)$ satisfying $\int f_{Y|X,\Z}(y,x,\z,\bb)g(x,\z)dx=0$ for all $y,\z$ is $g(x,\z)=0$. Thus identifiability is guaranteed for this commonly-used model.

\subsection{Constructing the class of consistent estimators}
\label{sec:derive-consistent-class}

To construct consistent (asymptotically unbiased) estimators for $\bb$
while leaving $f_{C,X|\Z}$ and $f_{\Z}$ unspecified, we use  a
  geometric approach \citep{bickel1993efficient,
  tsiatis2006semiparametric}. This approach treats the unspecified
densities as nuisance parameters and provides a framework to
separate their influence from the estimation of $\bb$. At the heart of
this approach are two key subspaces derived from the likelihood in
\eqref{eq:model}. The first is the nuisance tangent space $\Lambda$,
which contains all functions associated with the nuisance parameters
$f_{C,X|\Z}$ and $f_{\Z}$; its explicit form and derivation appear in
Section \ref{proofofpro1}. The second is its orthogonal complement
$\Lambda^{\perp}$, which by construction contains functions that are
orthogonal to---and thus minimally influenced by---the nuisance
parameters. Functions in $\Lambda^{\perp}$ have mean zero (proof in
Section \ref{proofofpro2}), which  allows them to form unbiased
estimating equations. Under mild regularity conditions
\citep{foutz1977unique}, solving these equations yields consistent
estimators of $\bb$.  
\begin{Pro}\label{pro:LambdaP}
    The orthogonal complement $\Lambda^\perp$ takes the form
\bse
\Lambda^\perp&=&\left[\bfb(y,w,\delta,\z)
=\delta\bfb_1(y,x,\z)+(1-\delta)\bfb_0(y,c,\z):
 E\{\bfb(Y,w,\Delta,\z) \mid c, x,\z\}=\0\right].
\ese
\end{Pro}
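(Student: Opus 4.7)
The plan is to characterize $\Lambda^{\perp}$ by first writing the nuisance tangent space $\Lambda$ explicitly and then imposing the orthogonality $E(\bfb\trans S)=0$ for every $S\in\Lambda$. The likelihood \eqref{eq:model} contains two unspecified nuisance pieces, $f_{\Z}$ and $f_{C,X|\Z}$. A one-dimensional parametric submodel through $f_{\Z}$ produces scores of the form $\alpha(\Z)$ with $E\{\alpha(\Z)\}=\0$. A submodel through $f_{C,X|\Z}$ produces full-data scores $a(C,X,\Z)$ with $E\{a(C,X,\Z)\mid\Z\}=\0$; because only $(W,\Delta)$ is observed in place of $(C,X)$, the corresponding observed-data score is the projection $E\{a(C,X,\Z)\mid Y,W,\Delta,\Z\}$. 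The space $\Lambda$ is then the closed linear span of these two families of observed-data scores.

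Next, I would note that any observed-data function decomposes automatically as $\bfb(y,w,\delta,\z)=\delta\bfb_1(y,w,\z)+(1-\delta)\bfb_0(y,w,\z)$, and rewriting $w$ as $x$ on $\{\delta=1\}$ and as $c$ on $\{\delta=0\}$ is purely notational, since $W=X$ when $\Delta=1$ and $W=C$ when $\Delta=0$. The real content of the proposition is therefore the conditional-mean-zero constraint, which I would derive by iterated expectation:
\bse
E\{\bfb\trans\alpha(\Z)\}&=&E[\alpha(\Z)\trans E(\bfb\mid\Z)],\\
E[\bfb\trans E\{a(C,X,\Z)\mid Y,W,\Delta,\Z\}]&=&E[a(C,X,\Z)\trans E(\bfb\mid C,X,\Z)].
\ese
Orthogonality to the first family forces $E(\bfb\mid\Z)=\0$; orthogonality to the second family forces $E(\bfb\mid C,X,\Z)$ to be a function of $\Z$ alone, which combined with the first conclusion yields $E(\bfb\mid C,X,\Z)=\0$. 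Reversing these steps gives the converse: any $\bfb$ satisfying the constraint is orthogonal to every $\alpha(\Z)$ and every observed-data score $E\{a\mid Y,W,\Delta,\Z\}$ in $\Lambda$.

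The main obstacle will be carefully juggling the two levels of conditioning---observed $(Y,W,\Delta,\Z)$ versus full-data $(C,X,\Z)$---when moving between inner products on the two sample spaces. Because $a(C,X,\Z)$ lives on the full-data $\sigma$-algebra while $\bfb$ lives on the coarser observed-data $\sigma$-algebra, the pivotal identity $E[\bfb\cdot E\{a\mid Y,W,\Delta,\Z\}]=E[a\cdot E\{\bfb\mid C,X,\Z\}]$ must be established rigorously using the conditional independence $C\indep Y\mid X,\Z$ encoded in \eqref{eq:model} together with the fact that $(W,\Delta)$ is a measurable function of $(C,X)$. Once this identity is in hand, and once one checks that the constraint $E\{a\mid\Z\}=\0$ does not inject any residual freedom beyond what the first family of scores already covers, the characterization of $\Lambda^{\perp}$ follows by standard projection arguments in semiparametric theory \citep{bickel1993efficient,tsiatis2006semiparametric}.
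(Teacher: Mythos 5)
Your proposal matches the paper's proof in all essentials: both characterize $\Lambda$ as the span of the $f_\Z$-scores $\alpha(\Z)$ and the observed-data projections $E\{\a(C,X,\Z)\mid Y,W,\Delta,\Z\}$ of full-data scores with $E\{\a(C,X,\Z)\mid\Z\}=\0$, both rest on the adjoint identity $E[\bfb\trans E\{\a\mid Y,W,\Delta,\Z\}]=E[E\{\bfb\mid C,X,\Z\}\trans\a]$ (which the paper establishes by the explicit integral computation you flag as the main technical step), and both conclude the converse by noting that orthogonality to every such $\a$ forces $E\{\bfb\mid C,X,\Z\}=\0$. The only cosmetic difference is in that last step---the paper substitutes the specific choice $\a=E\{\bfb\mid C,X,\Z\}$ to get $E[\Vert E\{\bfb\mid C,X,\Z\}\Vert^2]=0$ directly, whereas you argue via projection onto functions of $\Z$---but these are equivalent, so your argument is correct and follows the same route.
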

Any function in  $\Lambda^\perp$ yields a consistent estimator. By
varying  the choices of  $\bfb_1$ and $\bfb_0$  in Proposition
\ref{pro:LambdaP}, we now derive familiar estimators and examine their
properties. To facilitate this derivation, we introduce notation for
two score vectors: $\bS_{\bb}(y,w,\delta,\z;\bb) \equiv \partial \log
f_{Y,W,\Delta,\Z}(y,w,\delta,\z;\bb)/\partial\bb$ denotes the score
vector for $\bb$ from the observed likelihood in \eqref{eq:model}, and
$\bS^F_\bb(y,x,\z;\bb) \equiv \partial \log
f_{Y|X,\Z}(y,x,\z;\bb)/\partial\bb$ denotes the score vector from the
conditional density $f_{Y|X,\Z}$.

\begin{enumerate}
\item {\bf Complete case (CC) estimator:}
Setting $\bfb_0=\0$  and $\bfb_1(y,x,\z)=\S^F_{\bb}(y,x,\z)$  (which
satisfies $E\{\bfb_1(Y,x,\z)\mid x,\z\}=\0$) yields the CC estimator,
obtained by solving $\sum_{i=1}^n
\delta_i\S_{\bb}^F(y_i,x_i,\z_i)=\0$. While simple to implement and
consistent even when $f_{C,X|\Z}$ and $f_{\Z}$ are misspecified, this
estimator suffers from substantial efficiency loss by discarding all
censored observations. 

\item {\bf Inverse probability weighting  (IPW) estimator:}
Setting $\bfb_0=\0$ and $\bfb_1(y,x,\z)=\S^F_{\bb}(y,x,\z)/\pr(C\geq
x|x,\z)$ yields the IPW estimator, obtained by solving $\sum_{i=1}^n
\delta_i\S_{\bb}^F(y_i,x_i,\z_i)/\pr(C\geq x_i|x_i,\z_i)=\0$.  
 The IPW estimator improves upon the CC estimator by weighting
 uncensored observations to approximate what the full sample would
 look like without right-censoring. Like the CC estimator,  the IPW
 estimator remains consistent even when $f_{C,X|\Z}$ and $f_{\Z}$ are
 misspecified but suffers the same efficiency loss from discarding
 censored observations.

\item {\bf  Maximum likelihood estimator (MLE):}
The MLE does not arise from choices within $\Lambda^\perp$ but instead
maximizes the likelihood in \eqref{eq:model}, obtained by solving
$\sum_{i=1}^n \left[\delta_i\S_{\bb}^F(y_i,x_i,\z_i) +
  (1-\delta_i)E\{\S_{\bb}^F(y_i,X,\z_i)\mid X>c_i,
  y_i,c_i,\z_i\}\right]=\0$. The MLE incorporates all data, achieving
maximum efficiency when  $f_{C,X|\Z}(c,x,\z)$ is
correctly specified, but this density cannot be validated since $C$
and 
$X$ are never observed together, making the MLE prone to bias.
\end{enumerate}
Each estimator forces an unnecessary trade-off: sacrifice efficiency
by discarding censored observations (CC, IPW) or risk bias by
requiring correct specification of 
$f_{C,X|\Z}$ (MLE). These trade-offs motivated our development of SPIRE.

\section{The super doubly robust and efficient estimator}
\label{sec:spire}

\subsection{Development and properties of SPIRE}

Having established that $\Lambda^\perp$  can provide an entire
class of consistent estimators, a natural question arises: is there an
optimal choice within this class? The answer is yes—the semiparametric
efficient estimator, which achieves the smallest possible variance
among all consistent estimators in our framework. Finding this
estimator hinges on the projection theorem \citep{bickel1993efficient, tsiatis2006semiparametric}. We take the score vector $\bS_{\bb}(y,w,\delta,\z;\bb)$
defined earlier and project it onto $\Lambda^\perp$. Geometrically,
this projection finds the element in $\Lambda^\perp$ closest to
$\bS_{\bb}$. The resulting element, called the efficient score vector, 
$\S_{\eff}$, retains the maximum information about $\bb$ while lying
in the orthogonal complement, yielding an estimator with the smallest
variance achievable. The following proposition specifies the form of
this efficient score vector (proof in Section \ref{proofofpro3}).
\begin{Pro}\label{pro:Seff}
    The efficient score vector for $\bb$ is
    \bse
\S\eff(y,w,\delta,\z)&=&
\delta\S_\bb^F(y,x,\z;\bb)
-\delta 
\frac{E\{\a_0(C,x,\z)I(x\le C)\mid x,\z\}}{E\{I(x\le C)\mid x,\z\}}\\
&&-(1-\delta) \frac{E[\{\a_0(c,X,\z)-\S_\bb^F(y,X,\z;\bb)\}I(X> c)\mid y, c,\z]}{E\{I(X>
  c)\mid y, c,\z\}},
\ese
where $\S_\bb^F(y,x,\z;\bb)= \partial \log
f_{Y|X,\Z}(y,x,\z;\bb)/\partial \bb$ and $\a_0(c,x,\z)$ satisfies  
\bse
&&I(x\le c)\frac{E\{\a_0(C,x,\z)I(x\le C)\mid x,\z\}}{E\{I(x\le C)\mid x,\z\}}\n\\
&&+I(c<x)E\left[\frac{E\{(\a_0(c,X,\z)-\S_\bb^F(Y,X,\z))I(X> c)\mid Y, c,\z\}}{E\{I(X>
	c)\mid Y, c,\z\}}\mid c, x,\z\right]=0.
\ese
\end{Pro}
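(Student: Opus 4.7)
The approach is to apply the projection theorem from semiparametric efficiency: the efficient score $\S\eff$ is the projection of the observed-data score $\S_\bb(y,w,\delta,\z;\bb)$ onto the orthogonal complement $\Lambda^\perp$ characterized in Proposition \ref{pro:LambdaP}. Equivalently, $\S\eff$ is the unique minimizer of $E\{\|\S_\bb-\bfb\|^2\}$ among $\bfb(y,w,\delta,\z)=\delta\bfb_1(y,x,\z)+(1-\delta)\bfb_0(y,c,\z)$ satisfying the constraint $E\{\bfb\mid c,x,\z\}=\0$.

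To carry out this projection, I would introduce a vector-valued Lagrange multiplier $\a_0(c,x,\z)$ for the infinite-dimensional constraint and form
\begin{equation*}
L(\bfb,\a_0) = E\{\|\S_\bb-\bfb\|^2\} + 2E\{\a_0(C,X,\Z)^{\rm T}\bfb(Y,W,\Delta,\Z)\}.
\end{equation*}
The functional derivative in $\bfb$ vanishes precisely when
\begin{equation*}
\bfb(y,w,\delta,\z) = \S_\bb(y,w,\delta,\z;\bb) - E\{\a_0(C,X,\Z)\mid Y=y,W=w,\Delta=\delta,\Z=\z\}.
\end{equation*}
I would then evaluate this representation branch by branch, using the conditional independence $Y\indep C\mid X,\Z$ and the score identity $E\{\S_\bb^F(Y,X,\Z)\mid X,\Z\}=\0$. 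On the uncensored branch $(\delta=1,w=x)$, the observed-data score reduces to $\S_\bb^F(y,x,\z;\bb)$ while the $\a_0$ term collapses to $E\{\a_0(C,x,\z)\mid X=x,\Z=\z,C\ge x\}$, reproducing the first line of the proposition. On the censored branch $(\delta=0,w=c)$, the observed-data score equals $E\{\S_\bb^F(y,X,\z;\bb)\mid Y=y,C=c,X>c,\Z=\z\}$, which combines with the $\a_0$ term to yield the second line.

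Next, I would close the system by enforcing primal feasibility $E\{\S\eff\mid c,x,\z\}=\0$. Splitting on $\{x\le c\}$ versus $\{x>c\}$ and again invoking $Y\indep C\mid X,\Z$ together with $E\{\S_\bb^F(Y,x,\z)\mid x,\z\}=\0$ transforms the constraint into precisely the functional equation on $\a_0$ stated at the end of the proposition.

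The main obstacle will be establishing existence and uniqueness of a solution $\a_0$ to this functional equation, which is of Fredholm type with a kernel coupling the censored and uncensored regions through $f_{C,X\mid\Z}$. For existence, I would invoke the variational principle: $\Lambda^\perp$ is nonempty (it contains the complete-case element $\delta\S_\bb^F$) and closed in $L^2$, so the projection is well-defined and its dual variable supplies a valid $\a_0$. For uniqueness, I would appeal to the completeness condition of Lemma \ref{lem:1}, which forces the homogeneous version of the equation (with $\S_\bb^F$ replaced by $\0$) to admit only the trivial solution on the relevant support; the Fredholm alternative then delivers uniqueness of $\a_0$ modulo any null directions that do not affect $\S\eff$.
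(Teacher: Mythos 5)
Your proposal is correct and follows essentially the same route as the paper: the Lagrange-multiplier stationarity condition $\bfb=\S_\bb-E\{\a_0(C,X,\Z)\mid y,w,\delta,\z\}$ is, branch by branch, exactly the paper's decomposition of $\S_\bb$ into $\S\eff$ plus the element of the nuisance tangent space $\Lambda_m$ generated by $\a_0$, and your primal-feasibility step $E\{\S\eff(Y,w,\Delta,\z)\mid c,x,\z\}=\0$ is precisely how the paper derives the functional equation for $\a_0$ (splitting on $x\le c$ versus $c<x$ and using $Y\indep C\mid X,\Z$ with $E\{\S_\bb^F(Y,x,\z)\mid x,\z\}=\0$ to remove the uncensored-branch score term). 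Your closing discussion of existence and uniqueness of $\a_0$ via the Fredholm alternative goes beyond the paper, which simply presents the implicit equation without addressing its solvability.
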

We use this efficient score vector to construct SPIRE, denoted as 
$\wh\bb_n$, which solves $\sum_{i=1}^n
\S_{\eff}(y_i,w_i,\delta_i,\z_i;\bb_n)=\0$.  Since $\S_{\eff}$
involves expectations with respect to $f_{C,X|\Z}$, we now examine the
consequences of using a working model $f_{C,X|\Z}^*$ in place of the
true density. 

Let $\S_{\eff}^*$ denote the efficient score vector obtained under this working
model. It has the same form as Proposition \ref{pro:Seff}, but with
expectations $E$ replaced by $E^*$ (computed under $f_{C,X|\Z}^*$) and
$\a_0(c,x,\z)$ replaced by $\a_0^*(c,x,\z)$. To establish the
asymptotic properties of SPIRE under this working model, we introduce 
$J_n^*(\bb)=n^{-1}\sum_{i=1}^{n}{\partial\S\eff^*(y_i,w_i,\delta_i,\z_i;\bb)}/{\partial
  \bb\trans}$,
$J^*(\bb)=E\left[{\partial\S\eff^*(Y,W,\Delta,\Z;\bb)}/{\partial
    \bb\trans}\right]$ and
$V^*(\bb)=E\{\S\eff^*(Y,W,\Delta,\Z;\bb)^{\otimes2}\}$. We also impose
the following standard regularity conditions that  
 ensure $\S_{\eff}^*$ has a unique solution and well-behaved derivatives \citep{newey1994}: 
 \renewcommand{\labelenumi}{(C\arabic{enumi})} 
 \renewcommand{\theenumi}{C\arabic{enumi}}     
 
 \begin{enumerate}
 	\item\label{assump:c2} $\bb_0 \in \mathcal{B}$, and $\mathcal{B}$ is compact.
 	\item\label{assump:c1} On $\mathcal B$, $E\{\S\eff^*(Y,W,\Delta,\Z;\bb)\}=0$ only if
 	$\bb=\bb_0$, where $\bb_0$ is the true value of the parameter.
 	\item\label{assump:c3} $\S\eff^*(y,w,\delta,\z;\bb)$ is
          continuous in $\bb$ on $\mathcal{B}$. 
 	\item\label{assump:c4} $E[\sup_{\bb \in \mathcal{B}}\Vert
          \S\eff^*(Y,W,\Delta,\Z;\bb) \Vert] < \infty$. 
 	\item\label{assump:c5} $\bb_0$ lies in the interior of $\mathcal{B}$.
 	\item\label{assump:c6} $\S\eff^*(y,w,\delta,\z;\bb)$ is continuously differentiable
 	in a neighborhood $\mathcal{N}$ of $\bb_0$.
 	\item\label{assump:c7} $E[\sup_{\bb \in \mathcal{B}}\Vert
 	\partial\S\eff^*(Y,W,\Delta,\Z;\bb)/ \partial \bb\trans \Vert] < \infty$.
 	\item\label{assump:c8} $J^*(\bb_0)$ is nonsingular.
 \end{enumerate}

With these regularity conditions in place, we establish SPIRE's
consistency despite misspecification of $f_{C,X|\Z}$ (proof in
Section \ref{sec:proofthmconsis}). 
\begin{Th}[Consistency]{\label{thm:consis}}
	Under regularity conditions (C1)--(C4), if $\wh\bb_n$ solves
        the estimating equation
        $\sum_{i=1}^{n}\S\eff^*(y_i,w_i,\delta_i,\z_i;\wh\bb_n)=\0$
        using  any working model $f_{C,X|\Z}^*(c,x,\z)$, then
        $\wh{\bb}_n \to\bb_0$ in probability. 
\end{Th}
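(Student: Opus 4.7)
The plan is to combine a super-double-robustness identity---establishing $E\{\S\eff^*(Y,W,\Delta,\Z;\bb_0)\}=\0$ under the truth for any working model $f_{C,X|\Z}^*$---with a standard M-estimator consistency argument built on uniform convergence \citep{newey1994}. To prove unbiasedness at $\bb_0$, I would use the structural decomposition in Proposition~\ref{pro:Seff}: $\S\eff^*$ equals $\delta\,\S_\bb^F(Y,X,\Z;\bb_0)$ plus a $\delta$-weighted function of $(X,\Z)$ and a $(1-\delta)$-weighted function of $(Y,C,\Z)$, both correction terms being built from $\a_0^*$ via working-model expectations. The defining equation for $\a_0^*$ must hold pointwise in $(c,x,\z)$, so splitting it on the disjoint indicators $I(x\le c)$ and $I(c<x)$ forces the $(X,\Z)$-correction to vanish on $\{x\le c\}$ and forces the inner $E^*$-integral appearing in the $(1-\delta)$ term to vanish on $\{c<x\}$. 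Because both vanishing conditions reduce to integrals over $Y$ against the correctly specified $f_{Y|X,\Z}$, they carry over verbatim to the true distribution. Combining this with the conditional independence $C\indep Y\mid X,\Z$ and the score identity $E\{\S_\bb^F(Y,X,\Z;\bb_0)\mid X,\Z\}=\0$, iterated expectation conditional on $(C,X,\Z)$ then yields $E\{\S\eff^*(Y,W,\Delta,\Z;\bb_0)\}=\0$ regardless of how $f_{C,X|\Z}^*$ differs from the truth.

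Next, I would invoke uniform convergence. Compactness of $\mathcal{B}$ (C1), continuity of $\S\eff^*$ in $\bb$ (C3), and the dominance bound (C4) together imply the uniform law of large numbers, $\sup_{\bb\in\mathcal{B}}\|n^{-1}\sum_{i=1}^n \S\eff^*(y_i,w_i,\delta_i,\z_i;\bb) - E\{\S\eff^*(Y,W,\Delta,\Z;\bb)\}\|\povr 0$. The identifiability part of (C2), combined with Step~1, identifies $\bb_0$ as the unique root of the limit equation $E\{\S\eff^*(\cdot;\bb)\}=\0$ on $\mathcal{B}$. The standard argument for M-estimators \citep{newey1994} then delivers $\wh\bb_n\povr\bb_0$.

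The main obstacle is Step~1: the defining equation for $\a_0^*$ is formulated using expectations under the working model $f_{C,X|\Z}^*$, so the orthogonality it encodes is a priori only a working-model property, not a truth-level property. The argument succeeds because once the pointwise equation is split by the indicators $I(x\le c)$ and $I(c<x)$, each resulting condition involves expectations only over $Y$ with $X$ and $C$ held fixed, which depend solely on the correctly specified $f_{Y|X,\Z}$. This is what lets the conditions transfer unchanged to the truth and produces super double robustness without requiring correct specification of $f_{C|\Z}$ or $f_{X|C,\Z}$.
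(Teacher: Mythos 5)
Your proposal is correct and follows essentially the same route as the paper: a uniform law of large numbers from the compactness, continuity, and dominance conditions, identification of $\bb_0$ as the unique root of the limiting equation, and the standard M-estimator argmax argument. Your Step~1, which explicitly verifies $E\{\S\eff^*(Y,W,\Delta,\Z;\bb_0)\}=\0$ for an arbitrary working model by splitting the defining equation for $\a_0^*$ on $I(x\le c)$ versus $I(c<x)$ and noting that the resulting conditions involve only the correctly specified $f_{Y|X,\Z}$, is a point the paper folds into condition (C2) and the construction of $\S\eff^*$ rather than spelling out in the proof of Theorem~\ref{thm:consis}; making it explicit is a sound (and arguably clarifying) addition, not a departure.
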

Beyond consistency, we also establish SPIRE's asymptotic distribution and efficiency properties  (proof in
      Section \ref{sec:proofthmnormal}).
\begin{Th}[Asymptotic Normality and Semiparametric Efficiency]{\label{thm:norm}}
	Under regularity conditions (C1) -- (C8), SPIRE satisfies
        $\sqrt{n}(\wh{\bb}_n-\bb_0) \to
        N[\0,J^*(\bb_0)^{-1}V^*(\bb_0)\{J^*(\bb_0)^{-1}\}\trans]$ in
      distribution as $n \to \infty$.    When
      $f_{C,X|\Z}^*(c,x,\z)=f_{C,X|\Z}(c,x,\z)$ (i.e., the working
      model is correctly specified), SPIRE achieves the semiparametric
      efficiency bound with asymptotic variance
      $[E\{\S_{\eff}(Y,W,\Delta,\Z;\bb)^{\otimes2}\}]^{-1}$. 
\end{Th}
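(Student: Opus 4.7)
The plan is to treat SPIRE as a standard Z-estimator and combine a Taylor expansion with a uniform law of large numbers and a central limit theorem, and then exploit the projection characterization of $\S\eff$ to identify the limiting variance with the semiparametric efficiency bound under a correctly specified working model.

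First I would expand the estimating equation around $\bb_0$ via the mean value theorem, which is permitted by (C6). This yields
\begin{equation*}
\0 = \frac{1}{\sqrt{n}}\sum_{i=1}^{n}\S\eff^*(y_i,w_i,\delta_i,\z_i;\bb_0) + J_n^*(\tilde{\bb})\,\sqrt{n}(\wh\bb_n - \bb_0),
\end{equation*}
for some $\tilde{\bb}$ on the line segment between $\bb_0$ and $\wh\bb_n$. Theorem \ref{thm:consis} gives $\tilde{\bb}\povr\bb_0$, and (C6), (C7) combined with a uniform law of large numbers and the continuous mapping theorem yield $J_n^*(\tilde{\bb})\povr J^*(\bb_0)$, which is invertible by (C8). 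Since the argument used to prove Theorem \ref{thm:consis} already establishes $E\{\S\eff^*(Y,W,\Delta,\Z;\bb_0)\}=\0$ under the true law even when $f_{C,X|\Z}^*\ne f_{C,X|\Z}$ (the super doubly robust property), the multivariate central limit theorem applied to the first summand gives $n^{-1/2}\sum_{i}\S\eff^*(y_i,w_i,\delta_i,\z_i;\bb_0)\to N\{\0,V^*(\bb_0)\}$ in distribution. Slutsky's theorem then delivers the sandwich form stated in the theorem.

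For the efficiency claim, when $f_{C,X|\Z}^* = f_{C,X|\Z}$, $\S\eff^*=\S\eff$ is by construction the projection of $\S_\bb$ onto $\Lambda^\perp$. Writing $\S_\bb=\S\eff+(\S_\bb-\S\eff)$ with $\S\eff\in\Lambda^\perp$ and $\S_\bb-\S\eff\in\Lambda$, orthogonality gives $E(\S\eff\S_\bb\trans)=E(\S\eff\S\eff\trans)=V^*(\bb_0)$. Differentiating the identity $E\{\S\eff(Y,W,\Delta,\Z;\bb)\}=\0$ under the integral sign at $\bb=\bb_0$ and applying the usual score-derivative chain rule, justified by (C6), (C7), and dominated convergence, yields the generalized information equality $J^*(\bb_0) = -E(\S\eff\S_\bb\trans) = -V^*(\bb_0)$. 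Substituting back gives asymptotic variance $V^*(\bb_0)^{-1} = [E\{\S\eff(Y,W,\Delta,\Z;\bb_0)^{\otimes 2}\}]^{-1}$, matching the semiparametric efficiency bound.

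The main obstacle I anticipate is rigorously verifying the mean-zero property $E\{\S\eff^*(Y,W,\Delta,\Z;\bb_0)\}=\0$ under an arbitrary misspecified $f_{C,X|\Z}^*$. Although available here from Theorem \ref{thm:consis}, this property hinges on showing that $\S\eff^*$ retains the form $\delta\bfb_1^*(y,x,\z)+(1-\delta)\bfb_0^*(y,c,\z)$ required by Proposition \ref{pro:LambdaP}, and that the defining integral equation for $\a_0^*$ (solved with respect to $E^*$) nonetheless forces $E\{\bfb^*(Y,w,\Delta,\z)\mid c,x,\z\}=\0$ when computed under the true $E$. A secondary technical point is justifying the interchange of differentiation and expectation needed for the generalized information equality, which follows from dominated convergence under the moment bounds in (C6) and (C7).
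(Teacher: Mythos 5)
Your proposal is correct and follows essentially the same route as the paper's proof: a mean-value expansion of the estimating equation, convergence of $J_n^*(\tilde\bb)$ to $J^*(\bb_0)$ via consistency and (C7), the central limit theorem applied to $n^{-1/2}\sum_i\S\eff^*(y_i,w_i,\delta_i,\z_i;\bb_0)$, and Slutsky's theorem to obtain the sandwich form. In fact you go slightly further than the written proof, which stops at the sandwich formula: your projection-plus-generalized-information-equality argument ($J^*(\bb_0)=-E(\S\eff\S_\bb\trans)=-V^*(\bb_0)$, collapsing the sandwich to $V^*(\bb_0)^{-1}$) correctly supplies the efficiency claim that the paper leaves implicit, and your flagged concern about the mean-zero property of $\S\eff^*$ under a misspecified working model is exactly the point Theorem \ref{thm:consis} and Proposition \ref{pro:LambdaP} are designed to cover.
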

Together, these theorems establish SPIRE's defining
  properties. Theorem \ref{thm:consis} shows that SPIRE maintains
  consistency using any working model $f_{C,X|\Z}^*$, even when
  misspecified. Yet, the reason behind this robustness reveals an even
  stronger property. Through the factorization
  $f_{C,X|\Z}^*(c,x,\z)=f^*_{X|C,\Z}(x,c,\z) f^*_{C|\Z}(c,\z)$, we
  show in the next section that $f^*_{C|\Z}(c,\z)$ cancels entirely
  when solving for $\wh\bb_n$. This cancellation means SPIRE achieves
  super double robustness---needing neither $f_{C|\Z}^*$ nor
  $f_{X|C,\Z}^*$ to be correctly specified for consistency---while still
  attaining the efficiency bound when $f_{X|C,\Z}^*$  alone is correct.

This dual achievement resolves a longstanding trade-off in the
censored covariate literature. Robust estimators like the CC and the
IPW estimators maintain consistency under misspecification but
sacrifice efficiency by discarding censored observations, while
efficient estimators like the MLE require correct specification of
densities that cannot be validated since researchers never observe
both time of diagnosis and time of study exit together in the same
patient. SPIRE offers both: when $f_{X|C,\Z}$ is correctly
specified, SPIRE achieves the semiparametric efficiency bound; when
$f_{X|C,\Z}$ is misspecified, SPIRE sacrifices some efficiency but
remains consistent---unlike the MLE which becomes biased. This
consistency guarantee means different research groups can analyze the
same neurodegenerative disease cohort with different working models
for $f_{X|C,\Z}$ and still obtain valid estimates of pre-diagnosis
decline patterns. When their working models are correct, they also
gain optimal statistical power to identify when decline is most rapid,
combining reproducibility with the ability to detect intervention
windows despite  of the inherent uncertainty of right-censored covariate
settings.

\subsection{Implementation of SPIRE}

Computing $\S_{\text{eff}}$ in Proposition \ref{pro:Seff}  requires
evaluating the implicitly-defined function $\a_0^*(c,x,\mathbf{z})$
within nested conditional expectations. We now derive tractable
expressions for $\a_0^*(c,x,\mathbf{z})$. 

Differentiating the likelihood in \eqref{eq:model} with respect to
$\bb$ gives the score vector under the working model: 
 $\S_{\bb}(y,w,\delta,\z,\bb)=\delta\S_\bb^F(y,x,\z;\bb)+(1-\delta)\frac{E^*\{I(c<X)\S_\bb^F(y,X,\z;\bb) 
   \mid y,c,\z\}}{E^*\{I(c<X) \mid y,c,\z\}}$.  The main  insight is
 that $\S_{\text{eff}}$ equals $\S_{\bb}^*$ minus correction terms
 involving $\a_0^*(c,x,\mathbf{z})$ (see Proposition
 \ref{pro:Seff}). Therefore, $\a_0^*(c,x,\mathbf{z})$ must be chosen
 so that $E\{\S_{\bb}^*(Y,w,\delta,\z;\bb)\mid c,x,\z\} =
 E\{\text{correction terms with } \a_0^*(c,x,\mathbf{z})\mid
 c,x,\z\}.$ 
Solving for $\a_0^*(c,x,\mathbf{z})$ that satisfies this equality yields:
\be
E\{\S_\bb^*(Y,w,\delta,\z,\bb)\mid c, x,\z\}
&=&I(x\le c)\frac{E^*\{\a_0^*(C,x,\z)I(x\le C)\mid x,\z\}}{E^*\{I(x\le
  C)\mid x,\z\}}\label{eqn:explicit-a0}\\ 
&+&I(c<x)E\left[\frac{E^*\{\a_0^*(c,X,\z)I(X> c)\mid Y,
    c,\z\}}{E^*\{I(X>  c)\mid Y, c,\z\}}\mid c, x,\z\right].\nonumber 
\ee
Expressing \eqref{eqn:explicit-a0} in integral form allows us to derive $\a_0^*(c,x,\mathbf{z})$:
\be\label{eq:int}
&&I(c<x) 
\int
\frac{
	\int_c^{\infty}\S^F_{\bb}(y,x,\z;\bb) f_{Y|X,\Z}(y,x,\z;\bb)f_{C,X|\Z}^*(c,x,\z)dx}
{\int_{c}^{\infty}f_{Y|X,\Z}(y,x,\z;\bb)f_{C,X|\Z}^*(c,x,\z)dx}
f_{Y|X,\Z}(y,x,\z;\bb)dy\n\\
&=&\frac{\int_x^\infty \a_0^*(c,x,\z)f_{C,X|\Z}^*(c,x,\z)dc}{\int_x^\infty f_{C,X|\Z}^*(c,x,\z)dc}
I(x\le c) +\\
&&I(c<x)\int\frac{
	\int_{c}^{\infty}\a_0^*(c,x,\z)f_{Y|X,\Z}(y,x,\z;\bb)f_{C,X|\Z}^*(c,x,\z)dx
}{\int_{c}^{\infty}f_{Y|X,\Z}(y,x,\z;\bb)f_{C,X|\Z}^*(c,x,\z)dx}
f_{Y|X,\Z}(y,x,\z;\bb)dy. \n
\ee

Two simplifications transform  \eqref{eq:int} into a tractable
expression for $\alpha_0^*$. First, when $x\leq c$, \eqref{eq:int}
simplifies to  
\bse
0=\frac{\int_x^\infty\a_0^*(c,x,\z)f_{C,X|\Z}^*(c,x,\z)dc}{\int_x^\infty f_{C,X|\Z}^*(c,x,\z)dc},
\ese
yielding $\a_0^*(c,x,\z)=\0$. Thus, we need only determine
$\a_0^*(c,x,\z)$ for $x>c$.  Second, for $x>c$, the factorization
$f_{C,X|\Z}^*(c,x,\z)=f^*_{X|C,\Z}(x,c,\z) f^*_{C|\Z}(c,\z)$ allows us
to cancel $f^*_{C|\Z}(c,\z)$ throughout  \eqref{eq:int}.  
After applying these two simplifications, we obtain:
\bse
\S\eff^*(y,w,\delta,\z)&=&
\S_\bb^*(y,w,\delta,\z)
-\delta
\frac{\int_x^\infty\a_0^*(c,x,\z)f_{X|C,\Z}^*(x,c,\z)dc}{\int_x^\infty f_{X|C,\Z}^*(x,c,\z)dc}\\
&&-(1-\delta)\frac{
\int_{c}^{\infty}\a_0^*(c,x,\z)f_{Y|X,\Z}(y,x,\z;\bb)f_{X|C,\Z}^*(x,c,\z)dx
}{\int_{c}^{\infty}f_{Y|X,\Z}(y,x,\z;\bb)f_{X|C,\Z}^*(x,c,\z)dx},
\ese
where $\a_0^*(c,x,\z)$ satisfies 
\bse
&&I(c<x) 
\int
\frac{
	\int_{c}^{\infty}\S^F_{\bb}(y,x,\z;\bb)f_{Y|X,\Z}(y,x,\z;\bb) f_{X|C,\Z}^*(x,c,\z) dx}
{\int_{c}^{\infty}f_{Y|X,\Z}(y,x,\z;\bb)f_{X|C,\Z}^*(x,c,\z)dx}
f_{Y|X,\Z}(y,x,\z;\bb)dy\n\\
&=&I(c<x)\int\frac{
	\int_{c}^{\infty}\a_0^*(c,x,\z)f_{Y|X,\Z}(y,x,\z;\bb)f_{X|C,\Z}^*(x,c,\z)dx
}{\int_{c}^{\infty}f_{Y|X,\Z}(y,x,\z;\bb)f_{X|C,\Z}^*(x,c,\z)dx}
f_{Y|X,\Z}(y,x,\z;\bb) dy.
\ese

The simplified expressions show that we only need to model
 $f_{X|C,\Z}^*(x,c,\z)$ directly, as $f^*_{C|\Z}(c,\z)$ has canceled entirely from the implementation. Furthermore, we can
approximate  $f_{X|C,\Z}^*(x,c,\z)$ with a discrete density:
$f_{X|C,\Z}^* \approx \sum_{j=1}^m
p_j(c,\z)I(x=x_j)$, where we place mass at $m$ grid points $0 \leq x_1
< \cdots < x_m \leq \max(w_i)$ with weights
$p_j(c,\z)=f_{X|C,\Z}^*(x_j,c,\z)/\sum_{k=1}^m
f_{X|C,\Z}^*(x_k,c,\z)$. With this discretization, we have
\bse
&&E\left[\frac{E^*\{\a_0^*(c,X,\z)I(X> c)\mid Y, c,\z\}}{E^*\{I(X>
	c)\mid Y, c,\z\}}\mid c, x_k,\z\right] \\
&\approx& \int \left\{ \frac{\sum_{j=1}^{m} \a_0^*(c,x_j,\z)I(c<x_j)p_j(c,\z)f_{Y|X,\Z}(y,x_j,\z;\bb)}{\sum_{j=1}^{m}I(c<x_j)p_j(c,\z)f_{Y|X,\Z}(y,x_j,\z;\bb)}\right\} f_{Y|X,\Z}(y,x_k,\z;\bb) dy,
\ese
and 
\bse
&&E\left[\frac{E^*\{S^F_{\bb}(Y,X,\z;\bb)I(X> c)\mid Y, c,\z\}}{E^*\{I(X>
	c)\mid Y, c,\z\}}\mid c, x_k,\z\right] \\
&\approx&\int \left\{ \frac{\sum_{j=1}^{m} S^F_{\bb}(y,x_j,\z;\bb)I(c<x_j)p_j(c,\z)f_{Y|X,\Z}(y,x_j,\z;\bb)}{\sum_{j=1}^{m}I(c<x_j)p_j(c,\z)f_{Y|X,\Z}(y,x_j,\z;\bb)}\right\} f_{Y|X,\Z}(y,x_k,\z;\bb) dy.
\ese
This discretization transforms our problem of finding $\a_0^*(c,x,\z)$
into a system of linear equations: 
\begin{equation}\label{eq:imple}
\mathbf{A}(c,\mathbf{z})\a^{\trans}(c,\mathbf{z}) = \mathbf{b}^{\trans}(c,\mathbf{z}).
\end{equation}
Here, $\a(c,\mathbf{z})$ is the $q \times m$ matrix containing the
unknown values $\{\a_0^*(c,x_1,\mathbf{z}), \cdots,
\a_0^*(c,x_m,\mathbf{z})\}$ that we seek. The matrix
$\mathbf{A}(c,\mathbf{z})$ is $m \times m$ with $(k,j)$-th element: 
\begin{align*}
A_{kj}(c,\mathbf{z}) = \int \left\{ \frac{
  I(c<x_j)p_j(c,\mathbf{z})f_{Y|X,\mathbf{Z}}(y,x_j,\mathbf{z};\bb)}{\sum_{\ell=1}^{m}I(c<x_\ell)p_\ell(c,\mathbf{z})f_{Y|X,\mathbf{Z}}(y,x_\ell,\mathbf{z};\bb)}\right\}
  f_{Y|X,\mathbf{Z}}(y,x_k,\mathbf{z};\bb) dy,
\end{align*}
and the matrix $\mathbf{b}(c,\mathbf{z})$ is $q \times m$ with $k$-th column:
\begin{align*}
\int \left\{ \frac{\sum_{j=1}^{m}
    \S^F_{\bb}(y,x_j,\mathbf{z};\bb)I(c<x_j)p_j(c,\mathbf{z})f_{Y|X,\mathbf{Z}}(y,x_j,\mathbf{z};\bb)}{\sum_{\ell=1}^{m}I(c<x_\ell)p_\ell(c,\mathbf{z})f_{Y|X,\mathbf{Z}}(y,x_\ell,\mathbf{z};\bb)}\right\}
f_{Y|X,\mathbf{Z}}(y,x_k,\mathbf{z};\bb) dy.
\end{align*}
Algorithm \ref{alg:spire} summarizes the complete SPIRE implementation.

\begin{algorithm}[H]
	\caption{SPIRE Implementation}
	\label{alg:spire}
    1: Approximate
	$f_{X|C,\Z}^*(x,c,\z)$ as $\sum_{j=1}^{m}p_j(c,\z)I(x=x_j)$,  where
	$x_j, j=1, \dots, m$ are grid points evenly spread on $[0, \max(w_i)]$.

	2: For each $i=1, \dots, n$:
	
	 if $\delta_i=1$, let $\S\eff^*(y_i,w_i,\delta_i,\z_i)=\S_\bb^*(y_i,w_i,\delta_i,\z_i)$; 
	 
	 if $\delta_i=0$, let
	 \bse
	 \S\eff^*(y_i,w_i,\delta_i,\z_i)=\S_\bb^*(y_i,w_i,\delta_i,\z_i)
	 -\frac{\sum_{j=1}^{m} \a_0^*(c_i,x_j,\z_i)I(c_i<x_j)p_j(c_i,\z_i)f_{Y|X,\Z}(y_i,x_j,\z_i;\bb)}{\sum_{j=1}^{m}I(c_i<x_j)p_j(c_i,\z_i)f_{Y|X,\Z}(y_i,x_j,\z_i;\bb)},
	 \ese 
	 where $\a_0^*(c_i,x_j,\z_i)$ is obtained from \eqref{eq:imple}.
	 
	 3: Solve the estimation equation  $\sum_{i=1}^{n}\S\eff^*(y_i,w_i,\delta_i,\z_i;\bb)=\0$ 	 to obtain $\wh\bb_n$.
\end{algorithm}

\subsection{Test for noninformative covariate censoring}
\label{sec:noninfo-test}

While SPIRE handles both informative and noninformative covariate
censoring, detecting noninformative covariate censoring ($X \indep C
\mid \Z$) allows the use of more efficient estimators. Thus, we developed a test for this type of detection.

The test exploits how estimators respond differently to misspecifying $f_{X|C,\Z}^*$: SPIRE, the CC estimator, and the IPW estimator remain consistent under misspecification, while the MLE becomes inconsistent.
\begin{Th}[Chi-square Test for Noninformative Covariate Censoring]{\label{thm:chisq}}
Under regularity conditions (C1)--(C8), let $\wh{\bb}_1$   be either
        SPIRE, the CC estimator, or the IPW estimator, and let
        $\wh{\bb}_2$ be the  MLE. 
        	When $f_{X|C,\Z}^*$ is correctly specified, $n(\wh{\bb}_1-\wh{\bb}_2)^{\trans}V^{-1}(\wh{\bb}_1-\wh{\bb}_2) \to \chi^2_p$
	in distribution when $n \to \infty$, where
        $V=var(\bphi_1-\bphi_2)$, $\chi^2_p$ is a chi-square distribution with $p$ degrees of freedom.
       When $f_{X|C,\Z}^*$ is misspecified,
        the asymptotic distribution of
        $n(\wh{\bb}_1-\wh{\bb}_2)^{\trans}V^{-1}(\wh{\bb}_1-\wh{\bb}_2)
        $ is a non-central chi-square distribution.
        Here, $\bphi_1$ and $\bphi_2$ are the influence functions of
        $\wh{\bb}_1$ and $\wh{\bb}_2$, respectively. Specifically,
        $\bphi_i=-[E\{\partial\S_i(Y,W,\Delta,\Z;\bb)/\partial
          \bb\trans\}]^{-1}\S_i(Y,W,\Delta,\Z;\bb)$, for $i=1,2$,
          where $\S_1$ is $\S_{\rm{CC}}$, $\S_{\rm{IPW}}^*$, or $\S\eff$, and  $\S_2$ is
          $\S_{\rm{MLE}}$.  In practice, $V$ is estimated by $\wh V=n^{-1}\sumi\{\bphi_1(Y_i,W_i,\Delta_i,\Z_i;\wh\bb_1)-\bphi_2(Y_i,W_i,\Delta_i,\Z_i;\wh\bb_2)\}^{\otimes2}$.        
\end{Th}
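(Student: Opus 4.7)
My plan is to reduce the statement to a standard application of M-estimator influence function theory and the continuous mapping theorem. The core of the argument is that both $\wh{\bb}_1$ and $\wh{\bb}_2$ are Z-estimators defined by estimating equations of the form $\sum_i \S_j(Y_i,W_i,\Delta_i,\Z_i;\wh\bb_j)=\0$ for $j=1,2$. Under conditions (C1)--(C8) applied to each score (SPIRE via Theorems \ref{thm:consis}--\ref{thm:norm}, and the CC/IPW/MLE via their standard M-estimator arguments), a Taylor expansion of each estimating equation around the probability limit $\bb_j^*$ yields the i.i.d.\ expansion
\begin{equation*}
\sqrt{n}(\wh\bb_j-\bb_j^*)=n^{-1/2}\sum_{i=1}^n \bphi_j(Y_i,W_i,\Delta_i,\Z_i;\bb_j^*)+o_p(1),\quad j=1,2,
\end{equation*}
with $\bphi_j=-\{E(\partial\S_j/\partial\bb\trans)\}^{-1}\S_j$ as stated in the theorem. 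Here, super double robustness (Theorem \ref{thm:consis}) guarantees $\bb_1^*=\bb_0$ regardless of whether $f_{X|C,\Z}^*$ is correct, while the MLE's limit $\bb_2^*$ equals $\bb_0$ only when $f_{X|C,\Z}^*$ is correctly specified and may differ from $\bb_0$ under misspecification.

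Next, I would combine the two expansions to obtain
\begin{equation*}
\sqrt{n}\{(\wh\bb_1-\wh\bb_2)-(\bb_1^*-\bb_2^*)\}=n^{-1/2}\sum_{i=1}^n\{\bphi_1(O_i;\bb_0)-\bphi_2(O_i;\bb_2^*)\}+o_p(1),
\end{equation*}
where $O_i=(Y_i,W_i,\Delta_i,\Z_i)$. The multivariate CLT then gives $\sqrt{n}\{(\wh\bb_1-\wh\bb_2)-(\bb_1^*-\bb_2^*)\}\to N(\0,V)$ with $V=\text{var}(\bphi_1-\bphi_2)$. Under correct specification, $\bb_1^*=\bb_2^*=\bb_0$, so the centering term vanishes and $\sqrt{n}(\wh\bb_1-\wh\bb_2)\to N(\0,V)$. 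Assuming $V$ is nonsingular (a $p\times p$ rank condition justified by $\bphi_1\neq\bphi_2$ a.s., since CC/IPW/SPIRE and the MLE use the censored observations differently), the continuous mapping theorem applied to the quadratic form yields $n(\wh\bb_1-\wh\bb_2)\trans V^{-1}(\wh\bb_1-\wh\bb_2)\to\chi^2_p$. Replacing $V$ with the consistent sample estimator $\wh V$ preserves the limit by Slutsky's theorem, since $\wh V\povr V$ follows from the uniform law of large numbers under (C4) and (C7).

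For the misspecified case, I would work within a local alternative framework $\bb_2^*=\bb_0+h/\sqrt{n}$ for some $h\neq\0$, which is the standard device that gives a non-degenerate non-central chi-square limit rather than a divergent statistic. Under this localization, $\sqrt{n}(\wh\bb_1-\wh\bb_2)\to N(-h,V)$, and the continuous mapping theorem gives $n(\wh\bb_1-\wh\bb_2)\trans V^{-1}(\wh\bb_1-\wh\bb_2)\to\chi^2_p(\lambda)$ with non-centrality parameter $\lambda=h\trans V^{-1}h>0$, confirming the test has nontrivial local power.

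The main obstacles I anticipate are: (i) verifying the nonsingularity of $V$, which requires ruling out asymptotic equivalence of the two influence functions and may necessitate an explicit rank check case by case across the three pairings (CC vs.\ MLE, IPW vs.\ MLE, SPIRE vs.\ MLE); and (ii) rigorously identifying the probability limit $\bb_2^*$ of the MLE under misspecification of $f_{X|C,\Z}^*$, which requires showing that the misspecified likelihood equation still has a unique root that solves a Kullback--Leibler projection problem. Both obstacles are addressable using standard M-estimation tools \citep{newey1994}, but they are the steps where the stated regularity conditions (C1)--(C8) do the real work.
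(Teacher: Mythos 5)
Your proposal is correct and follows essentially the same route as the paper: influence-function expansions of $\wh\bb_1$ and $\wh\bb_2$ as Z-estimators, subtraction to get $\sqrt{n}(\wh\bb_1-\wh\bb_2)$ as a centered i.i.d.\ sum plus a bias term, the multivariate CLT, and the continuous mapping theorem applied to the quadratic form. The one place you genuinely diverge is the misspecified case, and there your treatment is the more careful one. The paper writes $\sqrt{n}(\wh\bb_2-\bb-\bxi)$ with a \emph{fixed} nonzero bias $\bxi$ and then asserts convergence to $\chi^2_p(\|\bxi\|^2)$; taken literally, a fixed $\bxi$ makes the quadratic form diverge (the test is consistent but has no proper limit), so the noncentral chi-square statement only holds under the local-alternative scaling $\bxi=h/\sqrt{n}$ that you introduce explicitly. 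Your noncentrality parameter $h\trans V^{-1}h$ is also the correct one for the quadratic form $X\trans V^{-1}X$ with $X\sim N(h,V)$, whereas the paper states $\|\bxi\|^2$, which ignores the weighting by $V^{-1}$. Finally, you flag two issues the paper's proof passes over silently --- nonsingularity of $V$ and identification of the MLE's probability limit under misspecification as a Kullback--Leibler projection --- both of which are legitimate gaps in the published argument rather than in yours.
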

The proof of Theorem \ref{thm:chisq} is  in Section\ref{proofofthmchisq}.
Based on Theorem \ref{thm:chisq}, we construct the test statistic
$T_{\rm chi}\equiv n(\wh{\bb}_1-\wh{\bb}_2)^{\trans}\wh 
V^{-1}(\wh{\bb}_1-\wh{\bb}_2)$. For the working model 
$f_{X|C,\mathbf{Z}}^*$, we use a nonparametric estimator of $f_{X|\Z}$,
such as the localized Kaplan-Meier estimator. Under the null
hypothesis of noninformative covariate censoring ($X \indep C \mid
\mathbf{Z}$), we have $f_{X|C,\Z}^* = f_{X|\Z}$, so the working model is
correctly specified and $T_{\text{chi}}$ follows a $\chi^2_p$
distribution asymptotically. We reject the null hypothesis at
significance level $\alpha$ if $T_{\text{chi}} > \chi_{p,\alpha}^2$,
where $\chi_{p,\alpha}^2$ is the $(1-\alpha)$ quantile of the
chi-square distribution with $p$ degrees of freedom. 

This test allows researchers to determine if the covariate censoring in their data is noninformative. If the test fails to reject the null hypothesis, researchers should consider using simpler estimators that  assume noninformative censoring for improved efficiency. 
 If the test rejects the null hypothesis, researchers should use SPIRE for valid inference despite the informative covariate censoring.

\section{Simulation studies}
\label{sec:sim}

\subsection{Evaluation of robustness and efficiency}

We evaluated SPIRE's super double robustness and efficiency in two
settings: a controlled setting, where $f_{X|C,Z}$ follows a normal
density, and a realistic setting, where $f_{X|C,\Z}$ follows a beta
density calibrated to match the Huntington disease data analyzed in
Section \ref{sec:app}. 

In the controlled setting, we generated $N=1,000$ samples of $n=1,000$
observations, each with $Z \sim \text{Bernoulli}(0.5)$, $C|Z \sim
\text{Uniform}(Z-0.5, Z+0.5)$, and $X|C,Z \sim \text{Normal}\{C-\mu,
(Z+1)/4\}$. The response $Y$ followed the linear model $Y = \beta_0 +
\beta_1X + \beta_2Z + \epsilon$, where $\epsilon\sim
\text{Normal}(0,1)$ and $\bb = (0.5, 0.2, -0.2)\trans$. We varied $\mu
\in \{0.75, 0, -0.3, -0.5\}$ to achieve right-censoring rates of
approximately 10\%, 50\%, 70\%, and 80\%.

In the realistic setting, we calibrated our simulation to the
Huntington disease dataset ($n=3,657$) by generating $N=1,000$ samples
of $n=3,000$ observations. We generated covariates matching the real
data structure: age at study entry $Z_0 \sim \text{Beta}(1.8874,
3.8470)$, cytosine-adenine-guanine (CAG) repeat length  (the genetic mutation causing Huntington disease)  $Z_1 \sim
\text{Beta}(3.5383, 11.4963)$, and sex $Z_2 \sim \text{Bernoulli}(0.5)$. The time of study exit
$C|\Z \sim \text{Beta}(0.3+Z_1, 1.1+Z_2) + Z_0$ and time of diagnosis
$X|C,\Z \sim \text{Beta}(1.6+5C, 2+Z_1+Z_2)+ Z_0$ yielded
approximately 85\% right-censoring to match the observed 84.7\%. The
response $Y|X,\Z$ followed $Y = \beta_0 + \beta_1(X-Z_0) + \beta_2Z_1
+ \beta_3Z_2 + \beta_4(X-Z_0)Z_2 + \epsilon$, where $\epsilon \sim
\text{Normal}(0,\sigma^2)$ and $\bb = (1.3, -1.8, -1.5, 0.1, 0.2,
1)\trans$. The term $(X-Z_0)$ measures years from study entry to
diagnosis, anchoring patients at diagnosis to reveal how clinical
measures accelerate as patients approach diagnosis. 

We implemented four estimators: the CC estimator (which analyzes only
uncensored observations), and three estimators that require a working
model $f_{X|C,\Z}^*$---the IPW estimator, MLE, and SPIRE. For the
latter three, we tested both correctly specified and deliberately
misspecified working models to evaluate robustness under varying
degrees of model violation. In the controlled setting, we tested two
working models for $f_{X|C,Z}^*$: (1) correctly specified as
$f_{X|C,Z}$, and (2) misspecified as uniform over $[\bar{X} - 3s(X),
\bar{X} + 3s(X)]$, where $\bar{X}$ and $s(X)$ denote the sample mean
and standard deviation of $X$. This misspecification ignores the true
dependence of $X$ on both $C$ and $Z$. In the realistic setting, we
implemented three working models for $f_{X|C,\Z}^*$: (1) correctly
specified as $f_{X|C,\Z}$, (2) misspecified as uniform over $[0,1]$,
ignoring all covariate dependencies, and (3) misspecified using a
localized Kaplan-Meier estimator that assumes $X \indep C|\Z$. The
localized Kaplan-Meier estimator uses the derivative of 
\be\label{eq:KM}
\wh S_{X|\Z}(t,\z)=\max \left[\prod_{j=1}^{n} \left\{ 1-
    \frac{K_h(\z-\z_j)}{\sum_{k=1}^{n}I(w_k \geq w_j)K_h(\z-\z_k)}
  \right\}^{I(w_j \leq t,\delta_j=1)},n^{-1}\right],
\ee
with Gaussian kernel $K_h(t)=K(t/h)/h$ and bandwidth $h=0.05$. This
third working model represents a sophisticated yet incorrect
specification---it captures the marginal distribution of $X|\Z$ while
wrongly assuming independence from $C$. 

Tables \ref{table:b1s1} and \ref{table:b1s2} show SPIRE's super double
robustness and semiparametric efficiency. Under correct specification
of $f_{X|C,\Z}^*$, all estimators achieved consistency, with empirical
bias near zero and 95\% confidence interval coverage at nominal
levels.  However, performance diverged  under misspecification: SPIRE
maintained consistency even when the working model was
wrong---achieving super double robustness---while the MLE produced
biased estimates.

The standard errors reveal SPIRE's efficiency advantages. When the
working model $f_{X|C,\Z}^*$ is correctly specified, SPIRE achieves
the semiparametric efficiency bound, producing standard errors
20--41\% smaller than the IPW estimator in the controlled setting and
16--23\% smaller in the realistic setting.  The gains over the CC
estimator were more modest but still meaningful, reaching 12\% at the
highest censoring rates. Interestingly, even under the misspecified
localized Kaplan-Meier estimator that wrongly assumes $X \indep C|\Z$,
SPIRE still outperformed the IPW estimator---a benefit not guaranteed
by theory. 

The MLE's behavior illustrates why robustness matters as much as
efficiency. While the MLE produced the smallest standard errors among
all estimators, this apparent advantage became a liability under
misspecification. The MLE's point estimates were biased, yet its
confidence intervals remained narrow: at 80\% right-censoring with
misspecification, these precise-looking 95\% intervals included the
true parameter values only 52\% of the time. Researchers would thus
report seemingly precise results that are wrong nearly half the
time. In contrast, SPIRE trades narrower intervals for reliability:
its confidence intervals maintain their 95\% coverage even under
misspecification. 

Across both settings, empirical standard deviations closely matched
the average standard errors predicted by our sandwich variance formula
(Theorem \ref{thm:norm}), indicating that SPIRE's uncertainty
quantification remains accurate whether the working model is correctly
specified or not.  The robustness and efficiency patterns shown in
Tables \ref{table:b1s1} and \ref{table:b1s2} hold across all model
parameters (see Tables \ref{table:b0s1}--\ref{table:b2s2} in Section
\ref{sec:addtional-sims}). 

\begin{table}[t]
	\renewcommand{\arraystretch}{1.5}
	\begin{tabular}{ccllllllll}
		\hline
		&               & \multicolumn{4}{c}{10\% censoring}         & \multicolumn{4}{c}{50\% censoring} \\ \cline{3-10} 
		$f^*_{X\mid C,Z}$& Estimator     & \multicolumn{1}{c}{Mean} & ESE & ASE & Cov & Mean    & ESE    & ASE    & Cov    \\ \hline
		true          & SPIRE      &          0.1992                &    0.0577 &  0.0563   &   94.2\%  &  0.2020       &   0.0939     &    0.0938    &    95.5\%    \\
		& CC &           0.2010               & 0.0588    &  0.0573   &   94.5\%  &   0.2039      &    0.0955    &   0.0953     &    95.4\%    \\
		& IPW &             0.2002             &  0.0722   &   0.0673  &    94.2\% &    0.2011     &    0.1342    &    0.1281    &    94.0\%    \\
		& MLE    &         0.1993                 &   0.0472  &  0.0468   &  94.8\%   &   0.1968      &    0.0505    &    0.0500    &     94.8\%   \\
		mis       & SPIRE     &          0.2010                &   0.0588  &   0.0573  &    94.2\% &    0.2039     &    0.0955    &     0.0956   &     95.5\%   \\
		& CC &        0.2010                  &   0.0588  &  0.0573   &  94.5\%   &    0.2039     &    0.0955    &    0.0953    &   95.4\%     \\
		& IPW &            0.1995              &   0.0926  &   0.0871  &  94.8\%   &     0.2010    &    0.1474    &  0.1386      &   93.5\%     \\
		& MLE    &          0.1710                &  0.0416   &    0.0413 &   89.2\%  &     0.1197    &  0.0333      &    0.0328    &    31.3\%    \\ \hline
		&               & \multicolumn{4}{c}{70\% censoring}         & \multicolumn{4}{c}{80\% censoring} \\ \cline{3-10} 
		$f^*_{X\mid C,Z}$ & Estimator     & Mean                     & ESE & ASE & Cov & Mean    & ESE    & ASE    & Cov    \\ \hline
		true          & SPIRE      &       0.2010           &  0.1267   &  0.1263   &   95.4\%  &    0.1970     &   0.1516     &   0.1637     &   96.1\%     \\
		& CC &          0.2055                &    0.1327 &   0.1329  &  95.4\%   &     0.2097    &    0.1715    &  0.1697      &   94.9\%     \\
		& IPW &         0.1947                 &   0.1880  &  0.1802   &  94.5\%   &    0.2032     &    0.2567    &    0.2446    &    92.8\%    \\
		& MLE    &          0.1999                &   0.0546  & 0.0555    &   95.5\%  &     0.2041    &     0.0613   &   0.0617     &   95.3\%     \\
		mis       & SPIRE      &           0.2055               &   0.1327  & 0.1336    &  95.6\%   &    0.2097     &     0.1715   &   0.1684     &    94.7\%    \\
		& CC &         0.2055                 &  0.1327   &  0.1329   & 95.4\%    & 0.2097        &    0.1715    &     0.1697   &    94.9\%    \\
		& IPW &       0.1937                   &  0.1903   &  0.1798   &  93.2\%   &    0.2064     &   0.2567     &     0.2422   &   93.2\%     \\
		& MLE    &        0.1191                  &   0.0344  &   0.0353  &   36.9\%  &    0.1251     &     0.0396   &   0.0397     &    52.0\%    \\ \hline
	\end{tabular}
	\caption{Simulation results of $\beta_1$   in the controlled setting based on $N=1,000$          replicates. Mean: Average of the parameter
          estimates; ESE: the empirical standard deviation of the 
          parameter estimate; ASE: the average estimated standard
          deviation; Cov: the empirical coverage of the 95\% confidence
          interval. true: the working model $f^*_{X\mid C,Z}$ is the
          true model. mis: the working model $f^*_{X\mid C,Z}$ is the
  misspecified model. SPIRE: semiparametric informative right-censored
  covariate estimator. CC: complete case
  estimator. IPW: inverse probability weighting estimator. MLE:
  maximum likelihood estimator.
        }
		\label{table:b1s1}
\end{table}

\begin{table}[!t]
	\renewcommand{\arraystretch}{1.5}
	\begin{tabular}{ccllllllll}
		\hline
		&               & \multicolumn{4}{c}{$\beta_1$}         & \multicolumn{4}{c}{$\beta_4$} \\ \cline{3-10} 
		$f^*_{X\mid C,Z}$& Estimator     & \multicolumn{1}{c}{Mean} & ESE & ASE & Cov & Mean    & ESE    & ASE    & Cov    \\ \hline
		tru          & SPIRE      &          -1.7978               &    0.2616 &  0.2740   &   94.7\%  &  0.1953       &   0.4064     &    0.4430    &    94.4\%    \\
		& CC &           -1.7988              & 0.2616   &  0.2738   &   94.4\%  &   0.1985      &    0.4094    &   0.4411     &    94.2\%    \\
		& IPW &             -1.7908              &   0.3105  &  0.3021   &  94.0\%   &   0.1861      &    0.5263    &    0.5086    &     93.5\%   \\
		& MLE    &         -1.7983              &   0.1623  &  0.1627  &  94.4\%   &   0.1978      &    0.2420    &    0.2407    &     95.2\%   \\
		unif       &SPIRE      &          -1.8096      &   0.2618  &   0.2753  &    94.8\% &    0.2086     &    0.4088    &     0.4420   &     94.0\%   \\
		& CC &         -1.7988              & 0.2616   &  0.2738   &   94.4\%  &   0.1985      &    0.4094    &   0.4411     &    94.2\%   \\
		& IPW &            -1.8004         &   0.3493  &   0.3752  &  95.9\%   &     0.2057    &    0.5198    &  0.5310      &   94.3\%     \\
		& MLE    &          -1.7222      &  0.1461   &    0.1465 &   91.9\%  &     0.3734    &  0.2023      &    0.2032   &    86.2\%    \\ 
		K-M       &SPIRE      &          -1.8089      &   0.2602  &   0.2731  &    94.8\% &    0.2079     &    0.4085    &     0.4404   &     94.0\%   \\
		& CC &         -1.7988              & 0.2616   &  0.2738   &   94.4\%  &   0.1985      &    0.4094    &   0.4411     &    94.2\%   \\
		& IPW &            -1.8004         &   0.3493  &   0.3752  &  95.9\%   &     0.2057    &    0.5198    &  0.5310      &   94.3\%     \\
		& MLE    &          -1.7237      &  0.1999  &    0.2122 &   95.4\%  &     0.3004    &  0.2816      &    0.2793   &    93.3\%    \\ \hline
	\end{tabular}
	\caption{Simulation results of $\beta_1$ and $\beta_4$ in the realistic setting based on $N=1,000
$		replicates. tru: the working model $f^*_{X\mid C,Z}$ is the
		true model. unif: the working model $f^*_{X\mid C,Z}$ is the
		uniform model.  K-M: the working model $f^*_{X\mid C,Z}$ is the
		localized Kaplan-Meier estimator.  Mean, ESE, ASE, and Cov as in Table \ref{table:b1s1}.        
	}
	\label{table:b1s2}
\end{table}

\subsection{Evaluation of power to detect differences between
  noninformative and informative covariate censoring}

We next evaluated whether the chi-square test (Theorem \ref{thm:chisq})
can correctly identify when covariate censoring is informative versus
noninformative. We modified the controlled setting by introducing a
dependency parameter $\alpha$ to modulate the relationship between $C$
and $X$ given $Z$. With $N=1,000$ samples of $n=3,500$ observations
each, we generated $Z \sim \text{Bernoulli}(0.5)$, $C|Z \sim
\text{Uniform}(Z-1, Z+1)$, and $X|C,Z \sim \text{Normal}\{\alpha C +
\mu, (Z+1)/\sigma^2\}$, with $Y$ and $\bb$ as in the controlled
setting. We varied $\alpha$, $\mu$, and $\sigma$ to generate different
dependency levels while maintaining 80\% right-censoring: $\alpha=0$
produces noninformative covariate censoring ($X \indep C|Z$), while
$\alpha > 0$ produces informative covariate censoring. To quantify the
conditional dependence between $C$ and $X$ given $Z$, we used the
conditional dependence coefficient proposed by
\cite{azadkia2021simple}. All estimators used the working model
$f_{X|C,Z}^* = f_{X|Z}$, which ignores dependence on $C$. This
specification is correct under noninformative covariate censoring but
incorrect under informative covariate censoring.  
We computed the test statistic $T_{\text{chi}} = n(\wh{\bb}_1 -
\wh{\bb}_2)^{\trans} \wh{V}^{-1}(\wh{\bb}_1 - \wh{\bb}_2)$, where
$\wh{\bb}_1$ is the CC estimator, the IPW estimator, or SPIRE and
$\wh{\bb}_2$ is the MLE, rejecting the null hypothesis of
noninformative covariate censoring at 5\% significance level
when $T_{\text{chi}} > \chi^2_{3,0.05} = 7.81$, where 3 equals the
dimension of $\bb$.

We evaluated both empirical size---the test's ability to maintain the nominal 5\% level---and empirical power---its ability to detect informative covariate censoring. While all three tests are asymptotically valid under the null hypothesis, finite-sample performance varied: SPIRE achieved an empirical size of 0.049, the CC estimator was slightly conservative (0.035), and the IPW estimator was slightly liberal (0.076). These differences, though modest, reflect finite-sample variability rather than theoretical distinctions. Figure \ref{fig:indep.test} shows empirical power across dependency levels. SPIRE and the CC estimator achieved similar power at all dependency levels, with both having sufficient efficiency to detect informative covariate censoring, whereas the IPW estimator's higher variance limited its power to detect departures from the null.

We validated these findings using the realistic setting, where the data generation has $C$ depend on $X$ given $\Z$. The chi-square test correctly identified this informative covariate censoring with empirical power of 0.967 (SPIRE), 0.998 (CC),  and 0.821 (IPW). These high power values show that the test can reliably detect informative covariate censoring when it exists.

\begin{figure}[H]
	\centering
	\includegraphics[width=0.6\linewidth]{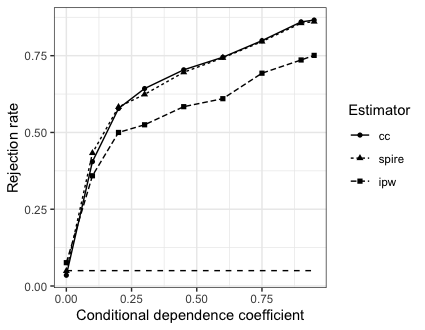}
	\caption{Simulation results of the empirical power and the
          empirical size based on 1000 replicates. ${\rm cc}$: when $\wh
          \bb_1$ is the CC estimator; ${\rm spire}$: when
          $\wh \bb_1$ is the SPIRE estimator; ${\rm ipw}$: when $\wh
          \bb_1$ is the IPW
          estimator. The horizontal dashed line represents the 0.05
          significance level.} 
	\label{fig:indep.test}
	\end{figure}

\section{Application to Enroll-HD data}
\label{sec:app}

Huntington disease offers a unique opportunity to study
neurodegenerative progression: a single, fully penetrant genetic
mutation allows definitive identification of future patients decades
before diagnosis \citep{scahill2020biological}. Unlike Alzheimer or
Parkinson disease, where at-risk populations remain uncertain,
individuals carrying the Huntington disease mutation can be followed
from health through decline, revealing when interventions might be
most effective \citep{langbehn2019association}. 

We analyzed data from 3,657 mutation carriers in Enroll-HD
\citep{sathe2021enroll}, a large, observational study of Huntington disease; all had entered the study without a diagnosis. 
All carried expanded CAG repeats ($\geq$40 repeats), the genetic mutation that
causes Huntington disease, with complete penetrance. Diagnosis occurred
when a clinician reached definite confidence that motor signs
represented disease manifestation, recorded as a diagnosis confidence
level (DCL) of 4 on a scale from 1 (low confidence) to 4 (definite)
\citep{hogarth2005interrater}. With 84.7\% of participants exiting
before diagnosis, their time of diagnosis $X$ was right-censored at
time of study exit $C$. Our clinical measure $Y$ was
the composite score from the Unified Huntington Disease Rating Scale (cUHDRS),
which
integrates motor, cognitive, and functional assessments; higher scores
indicate worse impairment \citep{schobel2017motor}.  
We modeled:
\bse
Y \sim \text{Normal}\{\beta_0 + \beta_1(X-Z_{\text{age}_0}) + \beta_2 Z_{\text{CAG}} + \beta_3 Z_{\text{sex}} + \beta_4(X-Z_{\text{age}_0})Z_{\text{sex}}, \sigma^2\},
\ese
where $X-Z_{\text{age}_0}$ anchors patients by years from study entry
to diagnosis (with $Z_{\text{age}_0}$ denoting age at study entry);
$Z_{\text{CAG}}$ is CAG repeat length; and $Z_{\text{sex}}$ indicates
female sex. We transformed each of the quantities
$X-Z_{\text{age}_0}$, $C-Z_{\text{age}_0}$, and $Z_{\text{CAG}}$ to
the $(0,1)$ interval (subtracting the minimum and dividing by the
range within each), allowing us to implement working models for
$f_{X|C,\mathbf{Z}}$ using standard distributions over $(0,1)$. 

To test for noninformative covariate censoring, we applied the
  localized Kaplan-Meier estimator
  $\widehat{S}_{X-Z_{\text{age}_0}|\mathbf{Z}}$ in \eqref{eq:KM}
  (bandwidth $h=0.20$) and obtained its derivative as our working model
  for the density
  $f_{X-Z_{\text{age}_0}|C-Z_{\text{age}_0},\mathbf{Z}}^*$. This
  working model assumes independence between time of diagnosis and
  time of study exit, precisely the assumption being tested. The test
  statistics comparing the CC estimator, the IPW estimator, and SPIRE
  against the MLE were 79.70, 62.44, and 74.34, respectively, all with
  p-values $<0.0001$, rejecting noninformative covariate censoring.  

Given this evidence of informative covariate censoring,  we next examined estimator performance under model misspecification. We applied  all four estimators using two deliberately misspecified working models for $f_{X-Z_{\text{age}_0}|C-Z_{\text{age}_0},\mathbf{Z}}^*$: (1) the uniform distribution over $(0,1)$, which ignores all covariate dependencies, and (2) the localized Kaplan-Meier estimator from our test, which incorrectly assumes independence. Figure \ref{fig:real_ci} presents 95\% confidence intervals for all parameters under both working models. All estimators show $\beta_1 < 0$, indicating that cUHDRS scores deteriorate as patients approach diagnosis, as expected in a progressive neurodegenerative disease.
SPIRE and the CC estimator produce similar estimates for $\beta_1$,
while the IPW estimator and MLE yield attenuated estimates, with the
MLE closest to zero. This attenuation could underestimate how rapidly
pre-diagnosis decline occurs, potentially leading researchers to
conclude that the intervention window is wider than it actually
is. The MLE's narrow confidence intervals compound this problem by
lending false certainty to the underestimate under informative
covariate censoring. Such misestimation could misdirect therapeutic
development by suggesting more time exists to detect treatment effects
than patients actually have before irreversible damage occurs. 

SPIRE's maintained estimation of $\beta_1=-0.9$ despite two forms of
misspecification---ignoring all dependencies or incorrectly assuming
independence---demonstrates its robustness for quantifying how rapidly
decline occurs when the true censoring mechanism remains unknown. This
robustness matters: accurately capturing how rapid pre-diagnosis
decline is directly informs how long trials must run to detect
treatment effects and how quickly patients approach irreversible
damage. In studies where 85\% right-censoring is common and dropout
patterns cannot be verified, SPIRE provides the consistency needed to
reliably find intervention windows.

\begin{figure}[H]
  \centering
  \includegraphics[width=0.45\textwidth]{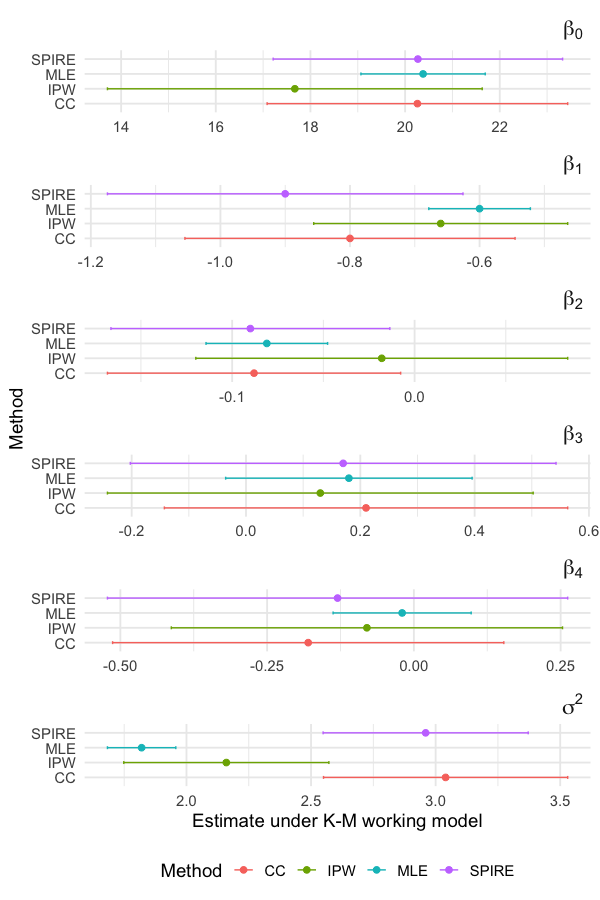}
  \hfill
  \includegraphics[width=0.45\textwidth]{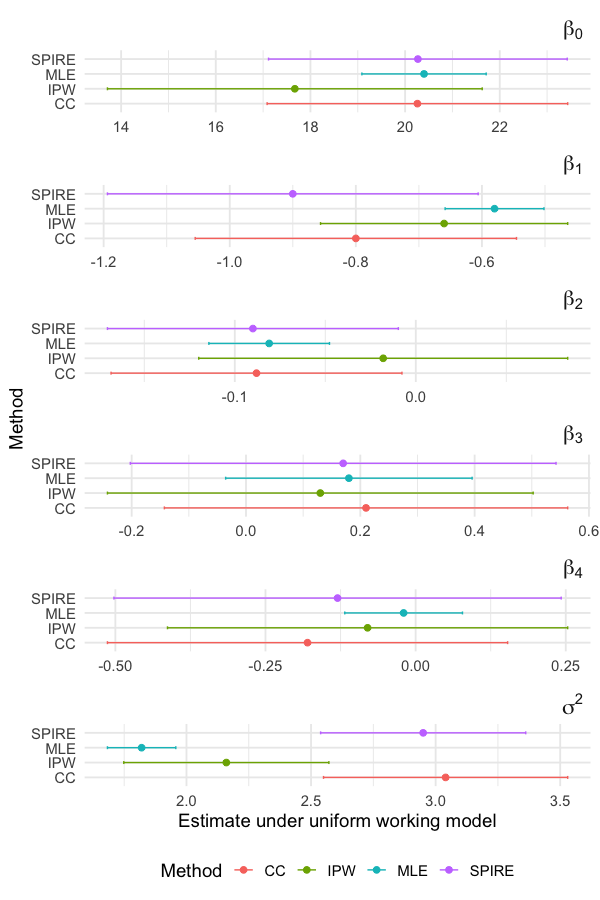}
  \caption{$95\%$ Confidence intervals for all parameters where the working model is the localized Kaplan-Meier (K-M) estimator (left) and the uniform model (right).} 
  \label{fig:real_ci}
\end{figure}

\section{Discussion}
\label{sec:discussion}

Our work shows that handling informative covariate censoring is
  more tractable than previously believed. The field has treated the
  joint density $f_{C,X|\Z}$ as fundamentally unverifiable because $C$
  and $X$ are never observed together. SPIRE  recognizes but reframes this challenge
    by being an estimator that does not rely on specifying 
      $f_{C,X|\Z}$ correctly.
When we decompose $f_{C,X|\Z} = f_{X|C,\Z} 
  f_{C|\Z}$  and derive the efficient score, the $f_{C|\Z}$ term
  cancels through the construction, and we bypass the specification 
  of $f_{X\mid C,\Z}$ through orthogonisation.
  This procedure shifts the
  paradigm from specifying unobservable densities to constructing
  estimators that circumvent them entirely. 

The chi-square test we developed complements SPIRE by transforming the
untestable assumption of noninformative covariate censoring into a
testable hypothesis. By exploiting the differential consistency between
estimators under misspecification, researchers can now determine
whether informative covariate censoring affects their data. Studies
examining how clinical measures change as patients approach diagnosis
have typically assumed noninformative covariate censoring
\citep{kong2018conditional,
  chu2020stochastic,scahill2020biological}---not because of oversight, but
because tools to handle or test informative covariate censoring were
unavailable. SPIRE and the accompanying test fill these gaps.

The immediate impact is practical: researchers analyzing
right-censored covariates no longer face the robustness-efficiency
trade-off that has characterized the censored covariate field. They can test for
informative covariate censoring, apply SPIRE if detected, and obtain
consistent,  potentially efficient estimates regardless of
modeling assumptions. The discrete approximation we demonstrated makes
implementation straightforward, while the sandwich variance formula
provides valid inference. For fields where 80--90\% right-censoring is common and the type of covariate censoring remains unknown, SPIRE and the accompanying test allow researchers to reliably estimate how rapidly decline occurs and identify intervention windows without relying on assumptions they cannot verify---moving beyond the constraints that have limited our understanding of pre-diagnosis decline.

\bibliographystyle{agsm}
\bibliography{main}

\section*{Supplementary Material}

\setcounter{equation}{0}\renewcommand{\theequation}{S.\arabic{equation}}
\setcounter{subsection}{0}\renewcommand{\thesubsection}{S.\arabic{subsection}}
\setcounter{table}{0}\renewcommand{\thetable}{S.\arabic{table}}

\subsection{Proof of Lemma \ref{lem:1}}\label{proofoflem1}

To prove identifiability, we proceed by contradiction. Suppose there exist two distinct parameters $\bb$ and $\wt\bb$, with their respective associated nuisance parameters $f$ and $\wt f$, that yield the same likelihood for any single observation. Then
\bse
&&\left\{\int_{x}^{\infty}f_{Y|X,\Z}(y,x,\z;\bb)f_{C,X|\Z}(c,x,\z)dc\right\}^{\delta}\\
&&\left\{\int_{c}^{\infty}f_{Y|X,\Z}(y,x,\z;\bb)f_{C,X|\Z}(c,x,\z)dx\right\}^{1-\delta}f_{\Z}(\z)\\
&=&\left\{\int_{x}^{\infty}f_{Y|X,\Z}(y,x,\z;\wt\bb)\wt
  f_{C,X|\Z}(c,x,\z)dc\right\}^{\delta}\\
  &&\left\{\int_{c}^{\infty}f_{Y|X,\Z}(y,x,\z;\wt\bb)\wt
  f_{C,X|\Z}(c,x,\z)dx\right\}^{1-\delta}\wt f_{\Z}(\z).
\ese
Substituting $\delta=1$ and $\delta=0$ into the above equation separately gives us two distinct equations:
\bse
\int_{x}^{\infty}f_{Y|X,\Z}(y,x,\z;\bb)f_{C,X|\Z}(c,x,\z)dcf_{\Z}(\z)
&=&\int_{x}^{\infty}f_{Y|X,\Z}(y,x,\z;\wt\bb)\wt
  f_{C,X|\Z}(c,x,\z)dc\wt f_{\Z}(\z),\n\\
\int_{c}^{\infty}f_{Y|X,\Z}(y,x,\z;\bb)f_{C,X|\Z}(c,x,\z)dxf_{\Z}(\z)
&=&\int_{c}^{\infty}f_{Y|X,\Z}(y,x,\z;\wt\bb)\wt
  f_{C,X|\Z}(c,x,\z)dx\wt f_{\Z}(\z).
\ese
 Integrating in $y$ leads to
\bse
\int_{x}^{\infty}f_{C,X|\Z}(c,x,\z)dcf_{\Z}(\z)
&=&\int_{x}^{\infty}\wt
  f_{C,X|\Z}(c,x,\z)dc\wt f_{\Z}(\z),\\
\int_{c}^{\infty}f_{C,X|\Z}(c,x,\z)dxf_{\Z}(\z)
&=&\int_{c}^{\infty}\wt
  f_{C,X|\Z}(c,x,\z)dx\wt f_{\Z}(\z),
\ese
which leads to
\bse
\iint_{t<x<c}f_{C,X|\Z}(c,x,\z)dcdxf_{\Z}(\z)
&=&\iint_{t<x<c}\wt
  f_{C,X|\Z}(c,x,\z)dcdx\wt f_{\Z}(\z),\\
\iint_{t<c<x}f_{C,X|\Z}(c,x,\z)dxdcf_{\Z}(\z)
&=&\iint_{t<c<x}\wt
  f_{C,X|\Z}(c,x,\z)dxdc\wt f_{\Z}(\z).
\ese
Taking the sum and letting $t=-\infty$, we get
$f_{\Z}(\z)=\wt f_{\Z}(\z)$, and subsequently,
\be
\int_x^{\infty}f_{C,X|\Z}(c,x,\z)dc
&=&\int_x^{\infty}\wt
  f_{C,X|\Z}(c,x,\z)dc,\n\\
\int_{c}^{\infty}f_{C,X|\Z}(c,x,\z)dx
&=&\int_{c}^{\infty}\wt
  f_{C,X|\Z}(c,x,\z)dx,\n\\
\int_t^\infty \int_t^\infty
f_{C,X|\Z}(c,x,\z)dcdx
&=&\int_t^\infty \int_t^\infty
 \wt f_{C,X|\Z}(c,x,\z)dcdx,\n\\
\int_{c}^{\infty}f_{Y|X,\Z}(y,x,\z;\bb)f_{C,X|\Z}(c,x,\z)dx
&=&\int_{c}^{\infty}f_{Y|X,\Z}(y,x,\z;\wt\bb)\wt
  f_{C,X|\Z}(c,x,\z)dx.\n\\
  \label{eq:a4}
\ee
The first relation above can be equivalently written as
\be\label{eq:use1}
f_{X|\Z}(x,\z)S_{C|X,Z}(x,x,\z)=\wt f_{X|\Z}(x,\z)\wt S_{C|X,Z}(x,x,\z).
\ee

Alternatively, we can rewrite the likelihood as
\bse
&&\left\{\int_{x}^{\infty}f_{Y|\Z}(y,\z)f_{X|Y,\Z}(x,y,\z)
f_{C|X,\Z}(c,x,\z)
dc\right\}^{\delta}\\
&&\left\{\int_{c}^{\infty}
f_{Y|\Z}(y,\z)f_{X|Y,\Z}(x,y,\z)f_{C|X,\Z}(c,x,\z)dx\right\}^{1-\delta}\n\\
&=&\left\{\int_{x}^{\infty}\wt f_{Y|\Z}(y,\z)\wt f_{X|Y,\Z}(x,y,\z)
\wt f_{C|X,\Z}(c,x,\z)
dc\right\}^{\delta}\\
&&\left\{\int_{c}^{\infty}
\wt f_{Y|\Z}(y,\z)\wt f_{X|Y,\Z}(x,y,\z)\wt f_{C|X,\Z}(c,x,\z)dx\right\}^{1-\delta},
\ese
hence
\bse
&&\int_{x}^{\infty}f_{Y|\Z}(y,\z)f_{X|Y,\Z}(x,y,\z)
f_{C|X,\Z}(c,x,\z)
dc\\
&=&\int_{x}^{\infty}\wt f_{Y|\Z}(y,\z)\wt f_{X|Y,\Z}(x,y,\z)
\wt f_{C|X,\Z}(c,x,\z)dc,\n\\
&&\int_{c}^{\infty}
f_{Y|\Z}(y,\z)f_{X|Y,\Z}(x,y,\z)f_{C|X,\Z}(c,x,\z)dx\\
&=&\int_{c}^{\infty}
\wt f_{Y|\Z}(y,\z)\wt f_{X|Y,\Z}(x,y,\z)\wt f_{C|X,\Z}(c,x,\z)dx.
\ese
This result leads to
\bse
&&\iint_{x<c}f_{Y|\Z}(y,\z)f_{X|Y,\Z}(x,y,\z)
f_{C|X,\Z}(c,x,\z)
dcdx\\
&=&\iint_{x<c}\wt f_{Y|\Z}(y,\z)\wt f_{X|Y,\Z}(x,y,\z)
\wt f_{C|X,\Z}(c,x,\z)dcdx,\n\\
&&\iint_{c<x}
f_{Y|\Z}(y,\z)f_{X|Y,\Z}(x,y,\z)f_{C|X,\Z}(c,x,\z)dxdc\\
&=&\iint_{c<x}
\wt f_{Y|\Z}(y,\z)\wt f_{X|Y,\Z}(x,y,\z)\wt f_{C|X,\Z}(c,x,\z)dxdc.
\ese
Adding these two equations together gives
\bse
&&\iint f_{Y|\Z}(y,\z)f_{X|Y,\Z}(x,y,\z)
f_{C|X,\Z}(c,x,\z)
dcdx\\
&=&\iint \wt f_{Y|\Z}(y,\z)\wt f_{X|Y,\Z}(x,y,\z)
\wt f_{C|X,\Z}(c,x,\z)dcdx,
\ese
i.e., $f_{Y|\Z}(y,\z)=\wt f_{Y|\Z}(y,\z)$. This subsequently leads
to
\bse
\int_{x}^{\infty}f_{X|Y,\Z}(x,y,\z)
f_{C|X,\Z}(c,x,\z)
dc
&=&\int_{x}^{\infty}\wt f_{X|Y,\Z}(x,y,\z)
\wt f_{C|X,\Z}(c,x,\z)dc,\n\\
\int_{c}^{\infty}
f_{X|Y,\Z}(x,y,\z)f_{C|X,\Z}(c,x,\z)dx&=&\int_{c}^{\infty}
\wt f_{X|Y,\Z}(x,y,\z)\wt f_{C|X,\Z}(c,x,\z)dx.
\ese
The first relation above can be written as
\be\label{eq:use2}
f_{X|Y,\Z}(x,y,\z)S_{C|X,Z}(x,x,\z)=\wt f_{X|Y,\Z}(x,y,\z)\wt S_{C|X,Z}(x,x,\z).
\ee
Taking the ratio of \eqref{eq:use2} and \eqref{eq:use1}, we get
\be\label{eq:ratio}
\frac{f_{X|Y,\Z}(x,y,\z)}{f_{X|\Z}(x,\z)}
=\frac{\wt f_{X|Y,\Z}(x,y,\z)}{\wt f_{X|\Z}(x,\z)}, 
\ee
which further leads to
\bse
f_{Y\mid X,\Z}(y,x,\z,\bb)
&=&
\frac{f_{X|Y,\Z}(x,y,\z)f_{Y|\Z}(y,\z)}{f_{X|\Z}(x,\z)}\\
&=&\frac{\wt f_{X|Y,\Z}(x,y,\z) f_{Y|\Z}(y,\z)}{\wt f_{X|\Z}(x,\z)}\\
&=&f_{Y\mid X,\Z}(y,x,\z,\wt\bb).
\ese
Hence, $\bb=\wt\bb$, i.e., $\bb$ is identifiable.

Now $f_{Y|X,\Z}(y,x,\z,\bb)$ and $f_{Y|\Z}(y,\z)$ are both unique, and
\bse
f_{Y|\Z}(y,\z)&=&\int f_{Y|X,\Z}(y,x,\z,\bb)f_{X|\Z}(x,\z)dx\\
&=&\int f_{Y|X,\Z}(y,x,\z,\bb)\wt f_{X|\Z}(x,\z)dx.
\ese
Under the completeness condition, we get $f_{X|\Z}(x,\z)=\wt
f_{X|\Z}(x,\z)$. This result together with
\eqref{eq:use1} 
leads to $S_{C|X,\Z}(x,x,\z)=\wt S_{C|X,\Z}(x,x,\z)$.
Similarly, \eqref{eq:a4} leads
to
\bse
&&\int I(c<x)f_{Y|X,\Z}(y,x,\z;\bb)f_{C|X,\Z}(c,x,\z)f_{X|Z}(x,\z)dx\\
&=&\int_{c}^{\infty}f_{Y|X,\Z}(y,x,\z;\bb)f_{C,X|\Z}(c,x,\z)dx\\
&=&\int_{c}^{\infty}f_{Y|X,\Z}(y,x,\z;\bb)\wt
  f_{C,X|\Z}(c,x,\z)dx\\
&=&\int I(c<x)f_{Y|X,\Z}(y,x,\z;\bb)\wt f_{C|X,\Z}(c,x,\z) f_{X|Z}(x,\z)
 dx.
\ese
Hence, the completeness condition leads to
\bse
I(c<x) f_{C|X,\Z}(c,x,\z) f_{X|Z}(x,\z)
=I(c<x)\wt f_{C|X,\Z}(c,x,\z) f_{X|Z}(x,\z),
\ese
and, thus, $f_{C|X,\Z}(c,x,\z)=\wt f_{C|X,\Z}(c,x,\z)$ for all $c<x$.
Now the likelihood of a single observation can be written as 
\bse
&&\left\{f_{Y|X,\Z}(y,x,\z;\bb)S_{C,X|\Z}(x,x,\z)\right\}^{\delta}\\
&&\left\{\int
  f_{Y|X,\Z}(y,x,\z;\bb)f_{C|X,\Z}(c,x,\z)
I(c<x)f_{X|\Z}(x,\z)
dx\right\}^{1-\delta}f_\Z(\z).
\ese
We have therefore proven that  $f_{C,X|\Z}$, $f_{\Z}$, $\bb$, are
identifiable. 
\qed

\subsection{Specific form of the nuisance tangent space $\Lambda$ and its proof}\label{proofofpro1}

\begin{Pro}\label{pro:Lambda}
The nuisance tangent space is $\Lambda \equiv
\Lambda_m\oplus\Lambda_\z$, where $\Lambda_m$ and $\Lambda_\z$ are the
nuisance tangent spaces for $f_{C,X|\Z}(c,x,\z)$ and $f_\Z(\z)$,
respectively. Here, $\Lambda_m$ stands for the main nuisance tangent space. Specifically,
\bse
\Lambda_m&=&\left[\delta 
\frac{E\{\a_1(C,x,\z)I(x\le C)\mid x,\z\}}{E\{I(x\le C)\mid x,\z\}}
+(1-\delta) \frac{E\{\a_1(c,X,\z)I(X> c)\mid y, c,\z\}}{E\{I(X>
  c)\mid y, c,\z\}}:\right.\\
&&\left.E\{\a_1(C,X,\z)\mid\z\}=\0
\right],\\
\Lambda_\z&=&[\a_2(\z): E\{\a_2(\Z)\}=\0].
\ese
\end{Pro}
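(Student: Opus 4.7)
The plan is to follow the standard geometric recipe from semiparametric theory: parameterize each nuisance component by a smooth one-dimensional submodel indexed by $\eta$, compute the observed-data score $\partial_\eta \log L(\eta)|_{\eta=0}$ from the likelihood in \eqref{eq:model}, and take the $L_2$-closure as the submodel varies. Because $f_{C,X|\Z}$ and $f_\Z$ enter \eqref{eq:model} as separate factors, their perturbations should decouple, so I expect $\Lambda$ to split as a direct sum of the two pieces $\Lambda_m$ and $\Lambda_\z$; I would verify the orthogonality of the two summands explicitly as a sanity check.

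The $\Lambda_\z$ piece is routine: a submodel $f_\Z(\z;\eta)$ contributes a score $\a_2(\z)=\partial_\eta\log f_\Z(\z;\eta)|_{\eta=0}$, and the normalization $\int f_\Z(\z;\eta)d\z=1$ forces $E\{\a_2(\Z)\}=\0$; conversely every mean-zero function of $\z$ is realizable as such a score, giving the stated form. For $\Lambda_m$, I parameterize $f_{C,X|\Z}(c,x,\z;\eta)$ with score $\a_1(c,x,\z)=\partial_\eta\log f_{C,X|\Z}(c,x,\z;\eta)|_{\eta=0}$; the conditional normalization $\int\int f_{C,X|\Z}(c,x,\z;\eta)dc\,dx=1$ for each $\z$ forces $E\{\a_1(C,X,\Z)\mid\Z\}=\0$. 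Differentiating the two observed-data factors in \eqref{eq:model} under the chain rule and evaluating at $\eta=0$, the $\delta=1$ factor (with $w=x$) should simplify, after cancelling the $\eta$-free term $f_{Y|X,\Z}(y,x,\z;\bb)$ from numerator and denominator, to
\bse
\frac{\int_x^\infty \a_1(c,x,\z) f_{C,X|\Z}(c,x,\z)dc}{\int_x^\infty f_{C,X|\Z}(c,x,\z)dc}
=\frac{E\{\a_1(C,x,\z)I(x\le C)\mid x,\z\}}{E\{I(x\le C)\mid x,\z\}},
\ese
while the $\delta=0$ factor (with $w=c$) should produce
\bse
\frac{\int_c^\infty \a_1(c,x,\z)f_{Y|X,\Z}(y,x,\z;\bb) f_{C,X|\Z}(c,x,\z)dx}{\int_c^\infty f_{Y|X,\Z}(y,x,\z;\bb)f_{C,X|\Z}(c,x,\z)dx}
=\frac{E\{\a_1(c,X,\z)I(X>c)\mid y,c,\z\}}{E\{I(X>c)\mid y,c,\z\}}.
\ese
Attaching the indicators $\delta$ and $1-\delta$ yields exactly the claimed form of $\Lambda_m$.

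To close the argument I would verify that any $s_m\in\Lambda_m$ satisfies $E\{s_m\mid\Z\}=\0$, which gives $\Lambda_m\perp\Lambda_\z$. Iterated expectations reduce the $\delta=1$ piece to $E\{\a_1(C,X,\Z)I(X\le C)\mid\Z\}$ and the $\delta=0$ piece to $E\{\a_1(C,X,\Z)I(X>C)\mid\Z\}$; these sum to $E\{\a_1(C,X,\Z)\mid\Z\}=\0$ by the nuisance constraint, confirming orthogonality, and every valid $\a_1$ is realizable as a submodel score so the characterization is complete. The main obstacle I anticipate is the $\delta=0$ simplification: one must recognize that the $x$-integral ratio is precisely the conditional expectation $E\{\a_1(c,X,\z)I(X>c)\mid Y=y,C=c,\Z=\z\}/E\{I(X>c)\mid Y=y,C=c,\Z=\z\}$. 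This requires Bayes' rule together with the modeling assumption $Y\indep C\mid X,\Z$ to identify the conditional law of $X$ given the observed $(Y,C,\Z)$; once that identification is in hand, the remainder is bookkeeping.
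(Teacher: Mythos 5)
Your proposal is correct and follows essentially the same route as the paper: perturb $f_{C,X|\Z}$ and $f_\Z$ separately, read off the two score ratios from the $\delta=1$ and $\delta=0$ factors of the likelihood in \eqref{eq:model} with the normalization constraints $E\{\a_1(C,X,\Z)\mid\Z\}=\0$ and $E\{\a_2(\Z)\}=\0$, identify $\Lambda_m$ and $\Lambda_\z$ as the spans of these scores, and confirm $\Lambda_m\perp\Lambda_\z$ by iterated expectations collapsing the two pieces to $E\{\a_1(C,X,\Z)I(X\le C)\mid\Z\}+E\{\a_1(C,X,\Z)I(X> C)\mid\Z\}=\0$. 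Your closing remark that recognizing the $\delta=0$ ratio as a conditional expectation given $(Y,C,\Z)$ uses $Y\indep C\mid X,\Z$ is a correct observation that the paper leaves implicit.
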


\begin{proof}
From \eqref{eq:model}, it is straightforward to derive that the nuisance scores associated with $f_{C,X|\Z}, f_\Z$, denoted respectively as $\S_1,\S_2$, are
\bse
\S_1(y,w,\delta,\z)&=&\delta \frac{\int_x^\infty\a_1(c,x,\z)f_{C,X|\Z}(c,x,\z)dc}{\int_x^\infty f_{C,X|\Z}(c,x,\z)dc}\\
&&+(1-\delta)\frac{
\int_{c}^{\infty}\a_1(c,x,\z)f_{Y|X,\Z}(y,x,\z;\bb)f_{C,X|\Z}(c,x,\z)dx
}{\int_{c}^{\infty}f_{Y|X,\Z}(y,x,\z;\bb)f_{C,X|\Z}(c,x,\z)dx}\\
&=&\delta 
\frac{E\{\a_1(C,x,\z)I(x\le C)\mid x,\z\}}{E\{I(x\le C)\mid x,\z\}}\\
&&+(1-\delta) \frac{E\{\a_1(c,X,\z)I(X> c)\mid y, c,\z\}}{E\{I(X>
  c)\mid y, c,\z\}}, \\
\S_2(y,w,\delta,\z)&=&\a_2(\z),
\ese
where $\a_1(c,x,\z),\a_2(\z)$ satisfy $E\{\a_1(C,X,\z)\mid\z\}=\0, E\{\a_2(\z)\}=\0$, respectively. 
The nuisance tangent spaces associated with $f_{C,X|\Z}$ and $f_\Z$ can now be identified as $\Lambda_m$ and $\Lambda_\z$, respectively (as defined in Proposition \ref{pro:Lambda}), since these spaces are formed by the linear spans of their corresponding nuisance scores.  Next, we show that $\Lambda_m \perp \Lambda_\z$. For any element in $\Lambda_m$, we have
\bse
&&E\left[
\delta\frac{\int_x^\infty\a_1(c,x,\z)f_{C,X|\Z}(c,x,\z)dc}{\int_x^\infty f_{C,X|\Z}(c,x,\z)dc}\right.\\
&&\left. +(1-\delta)\frac{
\int_{c}^{\infty}\a_1(c,x,\z)f_{Y|X,\Z}(y,x,\z;\bb)f_{C,X|\Z}(c,x,\z)dx
}{\int_{c}^{\infty}f_{Y|X,\Z}(y,x,\z;\bb)f_{C,X|\Z}(c,x,\z)dx}\mid\z
\right]\\
&=&\int 
\left\{
\frac{\int_x^\infty\a_1(c,x,\z)f_{C,X|\Z}(c,x,\z)dc}{\int_x^\infty f_{C,X|\Z}(c,x,\z)dc}\right\}
\left\{f_{Y|X,\Z}(y,x,\z;\bb)\int_x^{\infty}f_{C,X|\Z}(c,x,\z)dc\right\}dxdy\\
&&+\int 
\left\{
\frac{
\int_{c}^{\infty}\a_1(c,x,\z)f_{Y|X,\Z}(y,x,\z;\bb)f_{C,X|\Z}(c,x,\z)dx
}{\int_{c}^{\infty}f_{Y|X,\Z}(y,x,\z;\bb)f_{C,X|\Z}(c,x,\z)dx}\right\}\\
&&\left\{\int_c^{\infty}f_{Y|X,\Z}(y,x,\z;\bb)f_{C,X|\Z}(c,x,\z)dx\right\}
dcdy\\
&=&\int 
\int_x^\infty\a_1(c,x,\z)f_{C,X|\Z}(c,x,\z)dc dx
+\int 
\int_{c}^{\infty}\a_1(c,x,\z)f_{C,X|\Z}(c,x,\z)dx
dc\\
&=&E\{\a_1(C,X,\z)\mid\z\}=\0,
\ese
so $\Lambda_m\perp\Lambda_\z$.
\end{proof}

\subsection{Proof of Proposition \ref{pro:LambdaP}}\label{proofofpro2}

Since $\Lambda$ is the sum of $\Lambda_m$ and $\Lambda_\z$, it follows that $\Lambda^\perp=\Lambda_m^\perp\cap \Lambda_\z^\perp$. Also, $\Lambda_\z^\perp=[\bfb(y,w,\delta,\z):
E\{\bfb(Y,W,\Delta,\z)\mid\z\}=\0]$. \\
Let the set $\bA\equiv \left[\bfb(y,w,\delta,\z)=\delta \bfb_1(y,x,\z)+(1-\delta)
\bfb_0(y,c,\z):
E\{\bfb(Y,w,\Delta,\z) \mid c, x,\z\}=\0\right]$.

For any $\bfb(y,w,\delta,\z) \in \bA$:
\bse
&&E\left[\bfb\trans(Y,W,\delta,\Z)
\delta \frac{\int_X^\infty\a(c,X,\Z)f_{C,X|\Z}(c,X,\Z)dc}{\int_X^\infty f_{C,X|\Z}(c,X,\Z)dc}\right.\\
&&\left.+\bfb\trans(Y,W,\delta,\Z)(1-\delta)\frac{
	\int_{C}^{\infty}\a(C,x,\Z)f_{Y|X,\Z}(Y,x,\Z;\bb)f_{C,X|\Z}(C,x,\Z)dx
}{\int_{C}^{\infty}f_{Y|X,\Z}(Y,x,\Z;\bb)f_{C,X|\Z}(C,x,\Z)dx}\right]\\
&=&E\left[\int \bfb_1\trans(y,x,\Z)
\frac{\int_x^\infty\a(c,x,\Z)f_{C,X|\Z}(c,x,\Z)dc}{\int_x^\infty f_{C,X|\Z}(c,x,\Z)dc}
f_{Y|X,\Z}(y,x,\Z;\bb)\right.\\
&&\left.\int_x^{\infty}f_{C,X|\Z}(c,x,\Z)dcdxdy\right.\\
&&+\int\bfb_0\trans(y,c,\Z) \frac{
	\int_{c}^{\infty}\a(c,x,\Z)f_{Y|X,\Z}(y,x,\Z;\bb)f_{C,X|\Z}(c,x,\Z)dx
}{\int_{c}^{\infty}f_{Y|X,\Z}(y,x,\Z;\bb)f_{C,X|\Z}(c,x,\Z)dx}\\
&&\left.\int_c^{\infty}f_{Y|X,\Z}(y,x,\Z;\bb)f_{C,X|\Z}(c,x,\Z)dx
dcdy\right]\\
&=&E\left[\int \{\bfb_1(y,x,\Z) I(x\le c)+\bfb_0(y,c,\Z) I(x>c)\}\trans\right.\\
&&\left.\a(c,x,\Z)f_{C,X|\Z}(c,x,\Z)f_{Y|X,\Z}(y,x,\Z;\bb)dc
dxdy\right]\\
&=&E[E\{\bfb_1(Y,X,\Z) I(X\le C)+\bfb_0(Y,C,\Z) I(X>C)\mid C,
X,\Z\}\trans\a(C,X,\Z)]\\
&=&E[E\{\bfb(Y,W,\delta,\Z) \mid C,
X,\Z\}\trans\a(C,X,\Z)] \\
&=&0
\ese
for any $\a(c,x,\z)$ described in $\Lambda_m$, which satisfies $E\{\a(C,X,\z)|\z\}=\0$. 

Thus, $\bA \subset \Lambda_m^\perp$. In addition, $\bA\subset \Lambda_\z^\perp$ since each element of $\bA$ satisfies the orthogonality condition with respect to $\Lambda_\z$.  Then we have $\bA\subset \Lambda^\perp$. 

Conversely, for any $\bfb(y,w,\delta,\z) \in \Lambda^\perp$ (it is
also in $\Lambda_m^\perp$), we have: 
\bse
E[E\{\bfb(Y,W,\delta,\Z) \mid C,
X,\Z\}\trans\a(C,X,\Z)] = 0
\ese
for any $\a(c,x,\z)$ which satisfies $E\{\a(C,X,\z)|\z\}=\0$.

Take $\a(c,x,\z) = E[\bfb(Y,w,\delta,\z)|c,x,\z]$. Then, $E\{\a(C,X,\z)|\z\}=\0$ due to the fact $\bfb(y,w,\delta,\z) \in \Lambda^\perp$. Thus, we have $E[E\{\bfb(Y,W,\delta,\Z) \mid C,
X,\Z\}\trans E[\bfb(Y,W,\delta,\Z)|C,X,\Z]] = 0$, implying $E[\bfb(Y,w,\delta,\z)|c,x,\z]=0$.

Thus, we have shown that $\Lambda^\perp \subset \bA$.
To conclude, we have thus shown:
\bse
\Lambda^\perp=\left[\bfb(y,w,\delta,\z)=\delta \bfb_1(y,x,\z)+(1-\delta)
\bfb_0(y,c,\z):
 E\{\bfb(Y,w,\Delta,\z) \mid c, x,\z\}=\0\right].
\ese

\subsection{Proof of Proposition \ref{pro:Seff}}\label{proofofpro3}
The score vector $\S_\bb$ is \bse
&&\S_\bb(y,w,\delta,\z,\bb,f_{C,X|\Z})\\
&=&\delta\frac{\partial}{\partial\bb}\log f_{Y|X,\Z}(y,x,\z;\bb)+
(1-\delta)\frac{\partial}{\partial\bb}
\log
\left\{\int_c^{\infty}f_{Y|X,\Z}(y,x,\z;\bb)f_{C,X|\Z}(c,x,\z)dx\right\}\\
&=&\delta\S_\bb^F(y,x,\z;\bb)+
(1-\delta)\frac{
\int_c^{\infty}\S_\bb^F(y,x,\z;\bb) f_{Y|X,\Z}(y,x,\z;\bb)f_{C,X|\Z}(c,x,\z)dx}
{\int_c^{\infty}f_{Y|X,\Z}(y,x,\z;\bb)f_{C,X|\Z}(c,x,\z)dx},
\ese
where 
\bse
\S_\bb^F(y,x,\z;\bb)\equiv \partial \log f_{Y|X,\Z}(y,x,\z;\bb)/\partial\bb.
\ese

Using the definition of $\S_\bb(y,w,\delta,\z,\bb,f_{C,X|\Z})$ given above, we can prove that $E\{\S_\bb(Y,W,\Delta,\z,\bb,f_{C,X|\Z})\mid\z\}=\0$,  so 
$\S_\bb(y,w,\delta,\z,\bb,f_{C,X|\Z})\in\Lambda_\z^\perp$.
We write 
\be
\S_\bb(y,w,\delta,\z)
&=&\S(y,w,\delta,\z)
+\delta
\frac{\int_x^\infty\a_0(c,x,\z)f_{C,X|\Z}(c,x,\z)dc}{\int_x^\infty f_{C,X|\Z}(c,x,\z)dc}\label{eq:initial-form-seff}\\
&&+(1-\delta)\frac{
\int_{c}^{\infty}\a_0(c,x,\z)f_{Y|X,\Z}(y,x,\z;\bb)f_{C,X|\Z}(c,x,\z)dx
}{\int_{c}^{\infty}f_{Y|X,\Z}(y,x,\z;\bb)f_{C,X|\Z}(c,x,\z)dx},\nonumber
\ee
where
\bse
&&E\{\a_0(C,X,\z)\mid\z\}=\0,
\ese
and
\bse
&&E\{\S\eff(Y,w,\Delta,\z) \mid c, x,\z\}\\
&=&E\{\S_\bb(Y,w,\Delta,\z) \mid c,x,\z\}-\frac{\int_x^\infty\a_0(c,x,\z)f_{C,X|\Z}(c,x,\z)dc}{\int_x^\infty f_{C,X|\Z}(c,x,\z)dc}
I(x\le c) \\
&&-\int\frac{
\int_{c}^{\infty}\a_0(c,x,\z)f_{Y|X,\Z}(y,x,\z;\bb)f_{C,X|\Z}(c,x,\z)dx
}{\int_{c}^{\infty}f_{Y|X,\Z}(y,x,\z;\bb)f_{C,X|\Z}(c,x,\z)dx}
f_{Y|X,\Z}(y,x,\z;\bb)I(c<x) dy\\
&=&\0,
\ese
where the first equality follows from (\ref{eq:initial-form-seff}). The last equality uses our earlier result that elements of $\Lambda^{\perp}$ necessarily satisfy $E\{\S\eff(Y,w,\Delta,\z)|c,x,\z\}=\0$. This result
implies that  $\a_0$ satisfies 
\bse
&&E\{\S_\bb(Y,w,\Delta,\z) \mid c,x,\z\}\n\\
&=&\frac{\int_x^\infty\a_0(c,x,\z)f_{C,X|\Z}(c,x,\z)dc}{\int_x^\infty f_{C,X|\Z}(c,x,\z)dc}
I(x\le c) \n\\
&&+\int\frac{
\int_{c}^{\infty}\a_0(c,x,\z)f_{Y|X,\Z}(y,x,\z;\bb)f_{C,X|\Z}(c,x,\z)dx
}{\int_{c}^{\infty}f_{Y|X,\Z}(y,x,\z;\bb)f_{C,X|\Z}(c,x,\z)dx}
f_{Y|X,\Z}(y,x,\z;\bb)I(c<x) dy\n\\
&=&I(x\le c)\frac{E\{\a_0(C,x,\z)I(x\le C)\mid x,\z\}}{E\{I(x\le C)\mid x,\z\}}\n\\
&&+I(c<x)E\left[\frac{E\{\a_0(c,X,\z)I(X> c)\mid Y, c,\z\}}{E\{I(X>
  c)\mid Y, c,\z\}}\mid c, x,\z\right].
\ese

Since $E\{\S_\bb^F(Y,w,\Delta,\z) \mid c,x,\z\}=0$, we have
\bse
&&I(c<x)E\left[\frac{E\{\S_\bb^F(Y,X,\z)I(X> c)\mid Y, c,\z\}}{E\{I(X>
	c)\mid Y, c,\z\}}\mid c, x,\z\right]\n\\
&=&I(x\le c)\frac{E\{\a_0(C,x,\z)I(x\le C)\mid x,\z\}}{E\{I(x\le C)\mid x,\z\}}\\
&&+I(c<x)E\left[\frac{E\{\a_0(c,X,\z)I(X> c)\mid Y, c,\z\}}{E\{I(X>
	c)\mid Y, c,\z\}}\mid c, x,\z\right].
\ese

To further simplify, we get:
\be\label{eq:a}
0&=&I(x\le c)\frac{E\{\a_0(C,x,\z)I(x\le C)\mid x,\z\}}{E\{I(x\le C)\mid x,\z\}}+\n\\
&&I(c<x)E\left[\frac{E\{(\a_0(c,X,\z)-\S_\bb^F(Y,X,\z))I(X> c)\mid Y, c,\z\}}{E\{I(X>
	c)\mid Y, c,\z\}}\mid c, x,\z\right].\n\\
\ee

Thus,
\bse
\S\eff(y,w,\delta,\z)&=&
\S_\bb(y,w,\delta,\z)
-\delta
\frac{\int_x^\infty\a_0(c,x,\z)f_{C,X|\Z}(c,x,\z)dc}{\int_x^\infty f_{C,X|\Z}(c,x,\z)dc}\\
&&-(1-\delta)\frac{
\int_{c}^{\infty}\a_0(c,x,\z)f_{Y|X,\Z}(y,x,\z;\bb)f_{C,X|\Z}(c,x,\z)dx
}{\int_{c}^{\infty}f_{Y|X,\Z}(y,x,\z;\bb)f_{C,X|\Z}(c,x,\z)dx}\\
&=&
\S_\bb(y,w,\delta,\z)
-\delta 
\frac{E\{\a_0(C,x,\z)I(x\le C)\mid x,\z\}}{E\{I(x\le C)\mid x,\z\}}\\
&&-(1-\delta) \frac{E\{\a_0(c,X,\z)I(X> c)\mid Y, c,\z\}}{E\{I(X>
  c)\mid Y, c,\z\}},
\ese
where $\a_0(c,x,\z)$ satisfies (\ref{eq:a}).

\subsection{Proof of Theorem \ref{thm:consis}}\label{sec:proofthmconsis}

 By conditions (C2)--(C4), we have
\be\label{eq:uniform_conver}
\sup_{\bb \in \mathcal{B}} \Vert n^{-1}\sum_{i=1}^{n}\S\eff^*(y_i,w_i,\delta_i,\z_i;\bb)-E\{\S\eff^*(Y,W,\Delta,\Z;\bb)\} \Vert \rightarrow 0
\ee
in probability. Let $\wh
  Q_n(\bb)\equiv-\|n^{-1}\sum_{i=1}^{n}\S\eff^*(y_i,w_i,\delta_i,\z_i;\bb)\|^2$
  and $Q_0(\bb)\equiv-\|E\{\S\eff^*(Y,W,\Delta,\Z;\bb)\|^2$.
From conditions (C1)-(C4), it follows that $Q_0(\bb_0)=0$ and $Q_0(\bb)$ is uniquely maximized at $\bb_0$, while $Q_n(\wh \bb_n)=0$ holds by the definition of $\wh\bb_n$. Then for any $\epsilon > 0$, by  (\ref{eq:uniform_conver}) and the continuous mapping theorem, we have, with probability approaching one,
\bse
0 \geq Q_0(\wh \bb_n) > -\epsilon.
\ese
Let $\mathcal{N}$ be any open set of $\mathcal{B}$ containing
$\bb_0$. By the compactness of $\mathcal{B} \cap
  \mathcal{N}^c$, condition (C3), because $Q_0(\bb)$ is uniquely maximized at $\bb_0$, we have $\sup_{\bb \in \mathcal{B} \cap \mathcal{N}^c} Q_0(\bb)=Q_0(\bb^*) < Q_0(\bb_0)=0$ for some $\bb^* \in \mathcal{B} \cap \mathcal{N}^c$. 

Thus, choosing $\epsilon = -Q_0(\bb^*)$, it follows that $Q_0(\wh \bb_n) > \sup_{\bb \in \mathcal{B} \cap \mathcal{N}^c} Q_0(\bb)$ with probability approaching one. Hence, $\wh \bb_n \in \mathcal{N}$, i.e., $\wh \bb_n \rightarrow \bb_0$ in probability.

\subsection{Proof of Theorem \ref{thm:norm}}\label{sec:proofthmnormal}

By Taylor's theorem,
\bse
0=n^{-1}\sum_{i=1}^n\S\eff^*(y_i,w_i,\delta_i,\z_i;\wh{\bb}_n)=n^{-1}\sum_{i=1}^n\S\eff^*(y_i,w_i,\delta_i,\z_i;\bb_0)+J_n^*(\boldsymbol \xi)(\wh{\bb}_n-\bb_0)
\ese
for some $\boldsymbol \xi$ on the line joining $\bb_0$ and
$\wh{\bb}_n$. By Theorem \ref{thm:consis}, we have $\wh{\bb}_n
\rightarrow \bb_0$ in probability,
thus $\boldsymbol \xi \rightarrow \bb_0$ in probability. Combining $\boldsymbol \xi \rightarrow \bb_0$ in probability with condition (C7), we have
\bse
J_n^*(\boldsymbol \xi)=J^*(\bb_0)+o_P(1),
\ese
where $o_P(1)$ means a matrix sequence whose Frobenius norm tends to 0. Hence
\bse
0=n^{-1}\sum_{i=1}^n\S\eff^*(y_i,w_i,\delta_i,\z_i;\bb_0)+J^*(\bb_0)(\wh{\bb}_n-\bb_0)+o_P(1)(\wh{\bb}_n-\bb_0).
\ese
By condition (C8), we have
\bse
(\wh{\bb}_n-\bb_0)=-J^*(\bb_0)^{-1}n^{-1}\sum_{i=1}^n\S\eff^*(y_i,w_i,\delta_i,\z_i;\bb_0)+J^*(\bb_0)^{-1}o_P(1)(\wh{\bb}_n-\bb_0).
\ese
Rearranging this equation, we get
\bse
\sqrt{n}\{I_q+o_P(1)\}(\wh{\bb}_n-\bb_0)=-J^*(\bb_0)^{-1}n^{-1/2}\sum_{i=1}^n\S\eff^*(y_i,w_i,\delta_i,\z_i;\bb_0).
\ese
By the central limit theorem,
\bse
n^{-1/2}\sum_{i=1}^n\S\eff^*(y_i,w_i,\delta_i,\z_i;\bb_0) \rightarrow N\{\0,V^*(\bb_0)\}
\ese
in distribution, where $V^*(\bb_0)=E\{\S\eff^*(Y,W,\Delta,\Z;\bb_0)^{\otimes2}\}$. Hence, by Slutsky's Theorem, 
\bse
\sqrt{n}\{I_q+o_P(1)\}(\wh{\bb}_n-\bb_0) \rightarrow N[\0,\{J^*(\bb_0)\}^{-1}V^*(\bb_0)\{J^*(\bb_0)\}^{-\trans}]
\ese
in distribution. Using Slutsky's Theorem again, we have
\bse
\sqrt{n}(\wh{\bb}_n-\bb_0) \rightarrow N[\0,\{J^*(\bb_0)\}^{-1}V^*(\bb_0)\{J^*(\bb_0)\}^{-\trans}]
\ese
in distribution.
\qed

\subsection{Proof of Theorem \ref{thm:chisq}}\label{proofofthmchisq}
Under regularity conditions (C1)--(C8) and the null hypothesis, we have
\bse
\sqrt{n}(\wh\bb_1-\bb)=\frac{1}{\sqrt{n}}\sum\limits_{i=1}^n \bphi_1(y_i,w_i,\delta_i,\z_i;\bb)+\bo_P(1), \\
\sqrt{n}(\wh \bb_2-\bb)=\frac{1}{\sqrt{n}}\sum\limits_{i=1}^n \bphi_2(y_i,w_i,\delta_i,\z_i;\bb)+\bo_P(1).
\ese
Here, $\bphi_1$ and $\bphi_2$ are the influence functions of $\wh\bb_1$ and $\wh\bb_2$, respectively. Specifically, $\bphi_i=-[E\{\partial\S_i(Y,W,\Delta,\Z;\bb)/\partial
          \bb\trans\}]^{-1}\S_i(Y,W,\Delta,\Z;\bb)$, for $i=1,2$,
          where $\S_1$ is $\S_{\rm{CC}}$, $\S_{\rm{Inv}}^*$, or $\S\eff$, and $\S_2$ is
          $\S_{\rm{Imp}}$. 
Similarly, under regularity conditions (C1)--(C8) and the alternative hypothesis:
\bse
\sqrt{n}(\wh \bb_1-\bb)&=&\frac{1}{\sqrt{n}}\sum\limits_{i=1}^n \bphi_1(y_i,w_i,\delta_i,\z_i;\bb)+\bo_P(1), \\
\sqrt{n}(\wh \bb_2-\bb-\bxi)&=&\frac{1}{\sqrt{n}}\sum\limits_{i=1}^n \bphi_2(y_i,w_i,\delta_i,\z_i;\bb)+\bo_P(1).
\ese
Here, $\bxi$ ($\neq \0$) represents the non-zero bias introduced by the imputation estimator, while $\bphi_1$ and $\bphi_2$ are defined as before.

Thus, 
\bse
\sqrt{n}(\wh\bb_1-\wh\bb_2-\bxi)=\frac{1}{\sqrt{n}}\sum\limits_{i=1}^n\{\bphi_1(y_i,w_i,\delta_i,\z_i;\bb)-\bphi_2(y_i,w_i,\delta_i,\z_i;\bb)\}+\bo_P(1).
\ese
Here, $\bxi=\0$ under the null hypothesis and $\bxi\neq \0$ under the alternative hypothesis. Consequently, we have
\bse
n(\wh\bb_1-\wh\bb_2)^{\trans} V^{-1} (\wh\bb_1-\wh\bb_2) \rightarrow \chi^2_p(\|\bxi\|^2).
\ese
Here, $\chi^2_p(\|\bxi\|^2)$ is a noncentral chi-square distribution with $p$ degrees of freedom and noncentrality parameter $\|\bxi\|^2$ (the square of $l_2$-norm of $\bxi$). 
\qed

\subsection{Additional Simulation Results}
\label{sec:addtional-sims}

\begin{table}[H]
	\renewcommand{\arraystretch}{1.5}
	\begin{tabular}{ccllllllll}
		\hline
		&               & \multicolumn{4}{c}{10\% censoring}         & \multicolumn{4}{c}{50\% censoring} \\ \cline{3-10} 
		Working  &      & & & &  &    &     &     &   \\ 
		model & Estimator     & \multicolumn{1}{c}{Mean} & ESE & ASE & Cov & Mean    & ESE    & ASE    & Cov    \\ \hline

        tru          & SPIRE      &          0.4988                &    0.0668 &  0.0651  &   94.3\%  &  0.5005       &   0.0749    &    0.0732    &    94.5\%    \\
		& CC &           0.5007               & 0.0676    &  0.0659   &   94.4\%  &   0.5015      &    0.0754    &   0.0736     &    94.5\%    \\
		& IPW &             0.4997             &  0.0770   &   0.0740  &    94.6\% &    0.5004     &    0.0937    &    0.0911    &    93.3\%    \\
		& MLE    &         0.4984                 &   0.0575  &  0.0567   &  95.3\%   &   0.4920     &    0.0444   &    0.0448    &     94.3\%   \\
		mis       & SPIRE      &          0.5007                &   0.0676  &   0.0659  &    94.3\% &    0.5015     &    0.0754    &     0.0737   &     94.5\%   \\
		& CC &        0.5007                  &   0.0676  &  0.0659   &  94.4\%   &    0.5015     &    0.0754    &    0.0736    &   94.5\%     \\
		& IPW &            0.4985              &   0.0988  &   0.0970  &  96.4\%   &     0.5009    &    0.1059    &  0.1015      &   93.4\%     \\
		& MLE    &          0.4671                &  0.0536   &    0.0530 &   90.1\%  &     0.4308    &  0.0478      &    0.0486   &    70.8\%    \\ \hline
		&               & \multicolumn{4}{c}{70\% censoring}         & \multicolumn{4}{c}{80\% censoring} \\ \cline{3-10} 
		Working  &      & & & &  &    &     &     &   \\ 
        model & Estimator     & Mean                     & ESE & ASE & Cov & Mean    & ESE    & ASE    & Cov    \\ \hline
		tru          & SPIRE      &       0.5003           &  0.0943   &  0.0942   &   95.8\%  &    0.5020     &   0.1201     &   0.1205     &   95.0\%     \\
		& CC &          0.5009                &    0.0946 &   0.0951  &  96.0\%   &     0.5046    &    0.1215    &  0.1204      &   94.7\%     \\
		& IPW &         0.4935               &   0.1179  &  0.1188   &  95.3\%   &    0.5010     &    0.1606    &    0.1542    &    94.1\%    \\
		& MLE    &          0.4881                &   0.0480  & 0.0490   &   94.8\%  &     0.4849    &     0.0554   &   0.0567     &   94.1\%     \\
		mis       & SPIRE      &           0.5009               &   0.0945  & 0.0947    &  95.9\%   &    0.5046    &     0.1215   &   0.1197     &    94.7\%    \\
		& CC &         0.5009                 &  0.0946   &  0.0951   & 96.0\%    & 0.5046       &    0.1215    &     0.1204   &    94.7\%    \\
		& IPW &       0.4914                  &  0.1229   &  0.1228  &  95.5\%   &    0.5019     &   0.1606     &     0.1596   &   93.9\%     \\
		& MLE    &        0.4225                  &   0.0586  &   0.0604  &   75.9\%  &    0.4151     &     0.0725   &   0.0736     &    79.9\%    \\ \hline
	\end{tabular}
	\caption{Simulation results of $\beta_0$ in the controlled setting based on $N=1,000$          replicates.  All abbreviations and definitions are as in Table \ref{table:b1s1}.  
    }
		\label{table:b0s1}
\end{table}

\begin{table}[H]
	\renewcommand{\arraystretch}{1.5}
	\begin{tabular}{ccllllllll}
		\hline
		&               & \multicolumn{4}{c}{10\% censoring}         & \multicolumn{4}{c}{50\% censoring} \\ \cline{3-10} 
		Working  &      & & & &  &    &     &     &   \\ 
		 model & Estimator     & \multicolumn{1}{c}{Mean} & ESE & ASE & Cov & Mean    & ESE    & ASE    & Cov    \\ \hline
		tru          & SPIRE      &          -0.1993                &    0.0866 &  0.0833  &   93.8\%  &  -0.2008      &   0.1216    &    0.1187    &    93.9\%    \\
		& CC &           -0.2006               & 0.0871    &  0.0838   &   93.9\%  &   -0.2022      &    0.1224   &   0.1195     &    93.9\%    \\
		& IPW &             -0.1997           &  0.0913   &   0.0888  &    93.9\% &    -0.2002     &    0.1548    &    0.1515    &    93.6\%    \\
		& MLE    &         -0.1988                 &   0.0810  &  0.0786   &  94.3\%   &   -0.1938     &    0.0829   &    0.0803    &     94.4\%   \\
		mis       & SPIRE      &          -0.2006                &   0.0871  &   0.0838  &    93.8\% &    -0.2022     &    0.1224   &     0.1197   &     93.9\%   \\
		& CC &        -0.2006                  &   0.0871  &  0.0838   &  93.9\%   &    -0.2022    &    0.1224   &    0.1195    &   93.9\%     \\
		& IPW &            -0.1983              &   0.1160  &   0.1143  &  95.5\%   &     -0.2011    &    0.1767    &  0.1704      &   93.0\%     \\
		& MLE    &          -0.1669                &  0.0770   &    0.0751 &   91.7\%  &     -0.0792    &  0.0682      &    0.0670   &    56.3\%    \\ \hline
		&               & \multicolumn{4}{c}{70\% censoring}         & \multicolumn{4}{c}{80\% censoring} \\ \cline{3-10} 
		Working  &      & & & &  &    &     &     &   \\ 
        model & Estimator     & Mean                     & ESE & ASE & Cov & Mean    & ESE    & ASE    & Cov    \\ \hline
		tru          & SPIRE      &       -0.2018           &  0.1616   &  0.1585  &   94.4\%  &    -0.2042     &   0.1990     &   0.2033     &   95.4\%     \\
		& CC &          -0.2028               &    0.1631 &   0.1609  &  94.6\%   &     -0.2107    &    0.2071    &  0.2031      &   94.5\%     \\
		& IPW &         -0.1886               &   0.2171  &  0.2160   &  94.8\%   &    -0.2007     &    0.3032    &    0.2903    &    93.7\%    \\
		& MLE    &          -0.1951               &   0.0856  & 0.0835   &   93.9\%  &     -0.1991    &     0.0903   &   0.0877     &   93.7\%     \\
		mis       & SPIRE      &           -0.2028              &   0.1631  & 0.1606    &  94.6\%   &    -0.2107   &     0.2071   &   0.2012     &    94.5\%    \\
		& CC &         -0.2028                &  0.1631   &  0.1609   & 94.6\%    & -0.2107       &    0.2071    &     0.2031   &    94.5\%    \\
		& IPW &       -0.1860                 &  0.2216   &  0.2176  &  94.6\%   &    -0.2047     &   0.3032     &     0.2813   &   94.2\%     \\
		& MLE    &        -0.0576                &   0.0668  &   0.0657  &   42.8\%  &   -0.0509     &     0.0665   &   0.0656    &    38.8\%    \\ \hline
	\end{tabular}
	\caption{
    Simulation results of $\beta_2$ in the controlled setting based on $N=1,000$          replicates.  All abbreviations and definitions are as in Table \ref{table:b1s1}.
    }
		\label{table:b2s1}
\end{table}

\begin{table}[H]
\scalebox{0.9}{ 
	\renewcommand{\arraystretch}{1.5}
	\begin{tabular}{ccllllllll}
		\hline
		&               & \multicolumn{4}{c}{$\beta_0$}         & \multicolumn{4}{c}{$\beta_2$} \\ \cline{3-10} 
		Working  &      & & & &  &    &     &     &   \\ 
 model & Estimator     & \multicolumn{1}{c}{Mean} & ESE & ASE & Cov & Mean    & ESE    & ASE    & Cov    \\ \hline
		tru          & SPIRE      &          1.3020         &    0.1898 &  0.1899  &   94.4\%  &  -1.5104      &   0.4096    &    0.4067    &    95.8\%    \\
		& CC &           1.3034        & 0.1924    &  0.1896   &   94.4\%  &   -1.5138      &    0.4097  &   0.4049     &    95.6\%    \\
		& IPW &            1.3014          &  0.2116   &   0.2030  &    93.7\% &    -1.5208     &    0.4562    &    0.4469   &    95.3\%    \\
		& MLE    &         1.3017       &   0.1049  &  0.1036   &  95.0\%   &   -1.5051     &    0.1842   &    0.1787    &     93.9\%   \\
		unif       & SPIRE      &          1.3079          &   0.1908  &   0.1902  &    94.7\% &    -1.5125     &    0.4108  &     0.4054   &     95.7\%   \\
		& CC &      1.3034        & 0.1924    &  0.1896   &   94.4\%  &   -1.5138      &    0.4097  &   0.4049     &    95.6\%      \\
		& IPW &            1.2963      &   0.2424    &   0.2458    &  94.3\%   &     -1.4709    &    0.5355   &  0.5246      &   94.4\%     \\
		& MLE    &          1.3479                &  0.1022   &    0.1018 &   92.6\%  &     -1.5267   &  0.1847      &    0.1793   &    94.3\%    \\ 
		K-M       & SPIRE      &          1.3076          &   0.1908  &   0.1888  &    94.5\% &    -1.5125     &    0.4100  &     0.4026   &     95.6\%   \\
		& CC &      1.3034        & 0.1924    &  0.1896   &   94.4\%  &   -1.5138      &    0.4097  &   0.4049     &    95.6\%      \\
		& IPW &            1.2963      &   0.2424    &   0.2458    &  94.3\%   &     -1.4709    &    0.5355   &  0.5246      &   94.4\%     \\
		& MLE    &          1.4906       &  0.1461   &    0.1521 &   76.6\%  &     -1.7237  &  0.1999      &    0.2122   &    95.4\%    \\ \hline
		&               & \multicolumn{4}{c}{$\beta_3$}         &  \multicolumn{4}{c}{$\sigma^2$}   \\ \cline{3-10} 
		Working  &      & & & &  &    &     &     &   \\ 
model & Estimator     & Mean                     & ESE & ASE & Cov & Mean    & ESE    & ASE    & Cov  \\ \hline
		tru          & SPIRE     &       0.0983           &  0.1991   &  0.2134 &   94.7\%  & 0.9894 & 0.0654 & 0.0685 & 96.6\%  \\
		& CC &         0.0984     &    0.2003 &   0.2135  &  94.6\%  & 0.9899 & 0.0658 & 0.0621 & 94.3\%     \\
		& IPW &         0.1183               &   0.4016  &  0.4156   &  97.6\%  & 0.9859 & 0.0712 & 0.0658 & 92.6\%  \\
		& MLE    &          0.0930               &   0.1542 & 0.1573  &   95.1\%    & 0.9979 & 0.0296 & 0.0297 & 95.4\%   \\
		unif       & SPIRE      &           0.0938  &   0.1987  & 0.2140    &  94.3\%   &  0.9909 & 0.0655 & 0.0622 & 95.0\%   \\
		& CC &    0.0984     &    0.2003 &   0.2135  &  94.6\%  & 0.9899 & 0.0658 & 0.0621 & 94.3\%    \\
		& IPW &       0.0918      &  0.2622  &  0.2641  &  94.4\%  & 0.9589 & 0.1015 & 0.0953 & 91.5\%     \\
		& MLE   &      0.1190   &   0.1233  &   0.1236  &   95.1\%  & 0.9688 & 0.0320 & 0.0318 & 83.2\%   \\ 
		K-M       & SPIRE      &    0.0941  &   0.1986  & 0.2132    &  94.3\%   &  0.9910 & 0.0653 & 0.0620 & 94.4\%   \\
		& CC &    0.0984     &    0.2003 &   0.2135  &  94.6\%  & 0.9899 & 0.0658 & 0.0621 & 94.3\%    \\
		& IPW &       0.0918      &  0.2622  &  0.2641  &  94.4\%  & 0.9589 & 0.1015 & 0.0953 & 91.5\%     \\
		& MLE   &      0.0352   &   0.1730  &   0.1750 &   93.6\%  & 1.0357 & 0.0306 & 0.0316 & 79.9\%   \\ \hline
	\end{tabular}
    }
	\caption{
    %
      Simulation results of $\beta_0$, $\beta_2$, $\beta_3$, and $\sigma^2$ in the realistic setting based on $N=1,000$          replicates.  All abbreviations and definitions are as in Table \ref{table:b1s2}.
    }
    	\label{table:b2s2}
\end{table}

\end{document}